\def\part{\@startsection{part}{0}%
\z@{\linespacing\@plus\linespacing}{.5\linespacing}%
{\large\bfseries\centering}}
\newtheoremstyle{saetze} 
    {5pt}                    
    {5pt}                    
    {\itshape}                   
    {12pt}                           
    {\bfseries}                   
    {.}                          
    {.5em}                       
    {}  
\theoremstyle{saetze}
\newtheorem{thm}{Theorem}[section]
\newtheorem{lem}[thm]{Lemma}
\newtheorem{cor}[thm]{Corollary}
\newtheorem{prop}[thm]{Proposition}
\newtheorem{conj}[thm]{Conjecture} 
\theoremstyle{definition}
\newtheorem{example}[thm]{Example}
\newtheorem{remark}[thm]{Remark}
\newcommand{\calN}{{\mathcal N}}
\newcommand{\calR}{{\mathcal R}}
\newcommand{\one}{{\mathbf{1}}}
\newcommand{\Z}{{\mathbf{Z}}}
\def\N{\mathbb{N}}
\newcommand{\A}{{\mathbb A}}
\newcommand{\lift}{\operatorname{lift}}
\DeclareRobustCommand{\gobblefive}[5]{}
\def\part{\@startsection{part}{1}%
\z@{.7\linespacing\@plus\linespacing}{.5\linespacing}%
{\large\scshape\centering}}
\begin{document}
 \title{Pieri type rules and $GL(2|2)$ tensor products}

\author{Thorsten Heidersdorf} 
\address{T.H.: Department of Mathematics, The Ohio State University}
\address{T.H.: Max-Planck Institut f\"ur Mathematik, Bonn}
\address{T.H.: Institut f\"ur Mathematik, Universit\"at Bonn}
\email{heidersdorf.thorsten@gmail.com} 

\author{Rainer Weissauer}
\address{R.W.: Mathematisches Institut, Universit\"at Heidelberg}
\email{weissauer@mathi.uni-heidelberg.de}

\date{}

\begin{abstract} We derive a closed formula for the tensor product of a family of mixed tensors using Deligne's interpolating category $\underline{Rep}(GL_{0})$. We use this formula to compute the tensor product of a family of irreducible $GL(n|n)$-representations. This includes the tensor product of any two maximal atypical irreducible representations of $GL(2|2)$.
\end{abstract}



\thanks{2010 {\it Mathematics Subject Classification}: 17B10, 17B20.}
\thanks{{T.H.: Corresponding author}}

\maketitle

\section{Introduction}

For the classical group $GL(n)$ the tensor product decomposition \[L(\lambda) \otimes L(\mu) = \bigoplus_{\nu} c_{\lambda \mu}^{\nu} L(\nu)\] between two irreducible representations is given by the Littlewood-Richardson rule for the Littlewood-Richardson coefficients $c_{\lambda \mu}^{\nu}$. Contrary to this case the analogous decomposition between two irreducible representation of the General Linear Supergroup $GL(m|n)$ is poorly understood. A classical result from Berele and Regev \cite{Berele-Regev} and Sergeev \cite{Sergeev} shows that the fusion rule between direct summands of tensor powers $V^{\otimes r}$ of the standard representation $V \simeq k^{m|n}$ is again given by the Littlewood-Richardson rule. The first more general results were achieved in \cite{Heidersdorf-mixed-tensors} where we obtained a decomposition law for tensor products between any two mixed tensors, direct summands in a mixed tensor space $V^{\otimes r} \otimes (V^{\vee})^{\otimes s}$, $r,s \in \N$. This result is based on the tensor product decomposition in Deligne's interpolating category $\underline{Rep}(GL_{\delta})$ \cite{Deligne-interpolation}. Due to the universal property of Deligne's category, we have for $\delta = m-n$ a tensor functor $F_{m|n}: \underline{Rep}(GL_{m-n}) \to Rep(GL(m|n))$ sending the standard representation of the Deligne category to the standard representation $V = k^{m|n}$ of $GL(m|n)$. Since the decomposition of the tensor product of two indecomposable elements is known for $\underline{Rep}(GL_{m-n})$ by results of Comes and Wilson \cite{Comes-Wilson}, we obtain an analogous decomposition law once we describe the image $F_{m|n}(X)$ of an arbitrary indecomposable object $X$ in $\underline{Rep}(GL_{m-n})$. This was achieved in \cite{Heidersdorf-mixed-tensors} based on results by Brundan and Stroppel \cite{Brundan-Stroppel-5} on the interplay between Khovanov algebras and Walled Brauer algebras. Since any Kostant module \cite{Brundan-Stroppel-2} and any projective representation is a mixed tensor (up to some Berezin twist) \cite{Heidersdorf-mixed-tensors}, these results give a decomposition law for their tensor products, covering in particular the decomposition between any two irreducible $GL(m|1)$-representations.

\subsection{The main results} For $m,n \geq 2$ the irreducible mixed tensors are rather special. For example no non-trivial maximal atypical irreducible representation of $GL(n|n)$ is a mixed tensor. It is well-known that the weight of a maximal atypical representation is of the form $\lambda = (\lambda_1,\ldots,\lambda_n \ | \ -\lambda_n,\ldots,-\lambda_1)$, and we denote the corresponding irreducible representation by $[\lambda_1,\ldots,\lambda_n]$. We also denote the irreducible representation $[i,0,\ldots,0]$ by $S^i$ for $i \geq 0$. In this paper we obtain an almost complete picture for the tensor product $S^i \otimes S^j$ for any $i,j$ and any $n$ and show \[ S^i \otimes S^j \cong \delta_{ij}Ber^{i-1} \oplus M_{ij} \oplus \text{ semisimple part} \] where $M_{ij}$ is a maximal atypical indecomposable representation and where the semisimple part is of atypicality $n-2$ and can be explicitely understood in terms of the decomposition law for $G_0$. This is the only known case of a formula apart from the $GL(m|1)$-case and the case of Kostant modules. 

\medskip
Our interest in this result comes from different sources. 
\begin{enumerate} 
\item In recent years the structure of $\mathcal{T}_n$ as an abelian category was determined in \cite{Brundan-Stroppel-4}. While other questions remain (e.g. an analogue of Borel-Weil-Bott, a more convenient character formula etc.), we view the description of the monoidal properties of $\mathcal{T}_n$ or its analogues for the other supergroups as one of the central questions in the theory. We hope that these results shed some light on this difficult and important problem.
\item The Lie superalgebra $\mathfrak{gl}(2|2)$ or its simple counterpart $\mathfrak{psl}(2|2)$ occurs in several physical models of AdS theory \cite{Matsumoto-Molev} \cite{Quella-Schomerus}. In fact similar formulas for the fusion rules have been obtained before in the more restrictive $\mathfrak{psl}(2|2)$-case in the physics literature \cite{Goetz-Quella-Schomerus-psl}.
\item The quotient of $\mathcal{T}_n = Rep(GL(n|n))$ by its largest proper tensor ideal $\mathcal{N}$ is the representation category of a supergroup scheme \cite{Heidersdorf-semisimple} \cite{Heidersdorf-Weissauer-Tannaka}. The fusion rules obtained in this paper for $S^i \otimes S^j$ for $n=2$ play a crucial role in the determination of this group. More precisely, for every irreducible representation $L(\lambda)$ of non-vanishing superdimension we consider the tensor subcategory generated by it in $\mathcal{T}_n/\mathcal{N}$. It is equivalent to the representation category of a classical group $H_{\lambda}$ \cite{Heidersdorf-Weissauer-Tannaka}. In fact these groups (or their connected derived groups) can be understood inductively starting with the case $n=2$ considered in this paper. The inductive determination rests then on the super tannakian formalism of Deligne and dimension and rank estimates. In this sense the case $n=2$ is harder than the higher rank cases for $n \geq 3$. 
\end{enumerate}



\subsection{Summary of the proof} While the article uses a fair amount of computation, its approach is rather conceptual and uses a lot of theory: Deligne's interpolating categories $\underline{Rep}(GL_{m-n})$, the description of the functor $F_n: \underline{Rep}(GL_0) \to Rep(GL(n|n))$, the knowledge of the Loewy layers of mixed tensors based on Brundan and Stroppels results about the connection between the walled Brauer algebra and Khovanov algebras \cite{Brundan-Stroppel-5} and last but not least the formalism of cohomological tensor functors and Tannaka groups of \cite{Heidersdorf-Weissauer-tensor} \cite{Heidersdorf-Weissauer-Tannaka}. Let us describe the necessary steps.

\subsubsection{Step 1: The Deligne category $\underline{Rep}(GL_{\delta})$}
A first input is the tensor product decomposition in the Deligne category $\underline{Rep}(GL_0)$ \cite{Deligne-interpolation} \cite{Comes-Wilson}. The information about the tensor product decomposition between indecomposable objects in $\underline{Rep}(GL_0)$ can be transferred to  $\mathcal{T}_n$ by means of the symmetric monoidal functor $F_{n|n}: \underline{Rep}(GL_{0}) \to Rep(GL(n|n))$ sending the standard representation of the Deligne category to the standard representation $V = k^{n|n}$ of $GL(n|n)$. This functor can be described explicitely as in \cite{Comes-Wilson} \cite{Heidersdorf-mixed-tensors}. Implicitely results about the representation theory of the walled Brauer algebra play a crucial role here.

\subsubsection{Step 2: Khovanov algebras} \label{step-2}
The image $F_{n|n}(R(\lambda))$ of an indecomposable object $R(\lambda) \in \underline{Rep}(GL_0)$ can be determined \cite{Heidersdorf-mixed-tensors} based on a description of the Loewy structure of direct summands in a mixed tensor space $V^{\otimes r} \otimes (V^{\vee})^{\otimes s}$ \cite{Brundan-Stroppel-5}.   The results of Brundan and Stroppel are based on the combinatorial description of the blocks in $\mathcal{R}_n$ by means of the diagram algebra $K(m|n)$, a so-called Khovanov algebra \cite{Brundan-Stroppel-4}. The Loewy layers and the composition factors of the $F_{n|n}(R(\lambda))$ admit then a description in terms of the cup/cap combinatoric of the Khovanov algebras or, in other words, parabolic Kahzdan-Luztig theory for a maximal parabolic in type $A$.

\subsubsection{Step 3: The modules $\A_{S^i}$}
None of the irreducible representations $S^i$ is of the form $F_{n|n}(R(\lambda))$ (except for the trivial case $S^0 \cong \one$). To circumvent this problem, we first consider special mixed tensors $\A_{S^i}$ which contain the irreducible representation $S^i$ as the constituent of highest weight with multiplicity 1. As all mixed tensors these are of the form $F_{n|n}(R(\lambda))$ \cite{Comes-Wilson}. We proceed as follows: We derive a closed formula for the tensor product decomposition $\A_{S^i} \otimes \A_{S^j}$. This computation takes place in $\underline{Rep}(GL_0)$ and is then pushed to $\mathcal{T}_n$ via $F_{n|n}$.

\subsubsection{Step 4: $K_0$-decomposition and the case $n=2$}
The composition factors and the socle filtration of the $\A_{S^i}$ are known \cite{Heidersdorf-mixed-tensors}. The tensor product $\A_{S^i} \otimes \A_{S^j}$ splits into representations of the form $\A_{S^{k}}$ for some $k$, maximal atypical mixed tensors $R(a,b)$ and a semisimple part which is not maximal atypical. We specialize now to the case $n=2$ and view this decomposition in the Grothendieck ring $K_0(\mathcal{T}_2)$ and derive from this a closed formula for $S^i \otimes S^j$ in the Grothendieck ring. The difficult part here is to understand the maximal atypical part since the other summands of lower atypicality are irreducible. In fact we can easily derive a general formula for the non maximal atypical part in $S^i \otimes S^j$ for any $n$ as in section \ref{sec:gl(2|2)-lower-atypicality}: The remaining composition factors in $\A_{S^i} \otimes \A_{S^j}$ are all $(n-2)$-fold atypical and it is easy to see that they always lie in different blocks. Hence they cannot combine to an indecomposable representation and the $K_0$-decomposition is already enough for the computation. The reason for the specialization to $n=2$ is that we need an explicit description of the composition factors of the maximal atypical indecomposable mixed tensors $R(a,b)$. While this is theoretically possible by step \ref{step-2}, the combinatorics becomes very complex for $n \geq 3$. In the $n=2$ these modules are projective covers and their composition factors can be described easily.

We split the computation of $S^i \otimes S^j$ into two parts. We first project onto the maximal atypical block $\Gamma$ and then compute the remaining summands afterwards in section \ref{sec:gl(2|2)-lower-atypicality}. In \[ [ \A_{S^i}] \otimes [\A_{S^j}] \in K_0(\mathcal{R}_2) \] the tensor product $S^i \otimes S^j$ occurs exactly once, and all other tensor products are of the form $Ber^{p} (S^k \otimes S^l)$ with both $k$ and $l$ less or equal to $i$ and $j$ and some Berezin power $p$. This allows us to compute the maximal atypical composition factors of $S^i \otimes S^j$ recursively in lemma \ref{composition-factors}.

\subsubsection{Step 5: Cohomological tensor functors for $n=2$}
In order to determine the decomposition of $S^i \otimes S^j$ into maximal atypical indecomposable representations we use the theory of cohomological tensor functors \cite{Heidersdorf-Weissauer-tensor}. Here we consider the tensor functor $DS: Rep(GL(2|2)) \to Rep(GL(1|1))$. The main theorem of \cite{Heidersdorf-Weissauer-tensor} gives a formula for $DS(L)$ for any irreducible representation and we get $DS(S^i) = Ber^i \oplus \Pi^{1-i} Ber^{-1}$ where $\Pi$ denotes the parity shift functor. This gives us strict estimates on the number of indecomposable summands and their superdimension which is enough to determine the indecomposable summands in theorem \ref{gl-2-2-final}. 

\medskip
The steps 1 - 5 settle the entire $n=2$ case as well as the computation of the summands which are not maximal atypical for any $n$. The general case for $n \geq 3$ now follows rather easily assuming the formalism developed in \cite{Heidersdorf-Weissauer-Tannaka}. Therefore some results in section \ref{reduction} depend on \cite{Heidersdorf-Weissauer-Tannaka}. 

\subsubsection{Step 6: Reduction to the $n=2$-case}
We show in lemma \ref{thm:clean} that the $S^i \otimes S^j$ decomposition is always clean: every maximal atypical summand has non-vanishing superdimension. This means that we can the see the decomposition behaviour in the quotient of $\mathcal{T}_n$ by the ideal of negligible morphisms. But the structure of this quotient has been exactly determined in \cite{Heidersdorf-Weissauer-Tannaka} using the fusion rules for $n=2$! We stress that we need very little from the general setup of \cite{Heidersdorf-Weissauer-Tannaka} to deal with the $S^i$-case. In fact the $S^i$ case can be completely separated from the remaining cases as shown in \cite[Section 9.3]{Heidersdorf-Weissauer-Tannaka}.

\medskip
These methods allow in principle to compute also the maximal atypical composition factors in the decomposition $S^i \otimes S^j$ for $n \geq 3$. However it is difficult to determine the composition factors of the maximal atypical mixed tensors $R(a,b)$ for $n \geq 3$. We end the article with a conjecture for the socle of $S^i \otimes S^j$ for arbitrary $n$.

\section{ The superlinear groups}\label{2}

\subsection{Representations} Let $k$ be an algebraically closed field of characteristic zero. Let $\mathfrak{g} = \mathfrak{gl}(n|n) = \mathfrak{g}_0 \oplus \mathfrak{g}_1$ be the general linear superalgebra and $GL(n|n)$ the general linear supergroup. By definition a finite dimensional super representation $\rho$ of $\mathfrak{gl}(n\vert n)$ defines a representation $\rho$ of $GL(n \vert n)$ if its restriction to $\mathfrak{g}_0$ comes from an algebraic representation of $G_0 = GL(n) \times GL(n)$, also denoted $\rho$. We denote the category of finite-dimensional representations with parity-preserving morphisms by $\mathcal{T}_n = \mathcal{T}_{n|n}$. For $M \in \mathcal{T}$ we denote by $M^{\vee}$ the ordinary dual and by $M^*$ the twisted dual. For simple and for projective objects $M$ of $\mathcal{T}$ we have $M^*\cong M$ \cite{Brundan-Kazhdan}.

\subsection{The category ${\calR}_n$} \label{sec:rep} Fix the morphism $\varepsilon: \mathbb Z/2\mathbb Z \to G_{0}=GL(n)\times GL(n)$ which maps $-1$ to the element 
$diag(E_n,-E_n)\in GL(n)\times GL(n)$ denoted $\epsilon_{nn}$. We write $\epsilon_n = \epsilon_{nn}$. Note that
$Ad(\epsilon_{nn})$ induces the parity morphism on the Lie superalgebra $\mathfrak{gl}(n|n)$ of $G$. 
We define the abelian subcategory
$\calR_n$ of $\mathcal{T}_n$ as the full subcategory of all objects $(V,\rho)$ in $\mathcal{T}_n$  
with the property $  p_V = \rho(\epsilon_{nn})$; here $\rho$ denotes the underlying homomorphism $\rho: GL(n)\times GL(n) 
\to GL(V)$ of algebraic groups over $k$ and $p_V$ the parity automorphism of $V$.
The subcategory ${\calR}_n$ is stable under the dualities ${}^\vee$ and $^*$. The irreducible representations in $\mathcal{R}_n$ are indexed by dominant integral weights with respect to the standard Borel subalgebra of upper triangular matrices. We denote by $L(\lambda)$ the irreducible representation with highest weight $\lambda = (\lambda_1,\ldots, \lambda_n | \lambda_{n+1}, \ldots, \lambda_{2n})$ where $\lambda \in \Z^{2n}$ is any element satisfying \[ \lambda_1 \geq \lambda_2 \geq \ldots \geq \lambda_n \text{ and } \lambda_{n+1} \geq \lambda_{n+2} \geq \ldots \geq \lambda_{2n}.\] The Berezin determinant of $GL(n|n)$
defines a one dimensional representation $B=Ber$ with weight $(1,\ldots,1 \ | \ -1, \ldots, -1)$. For each representation $M \in \mathcal{R}_n$ we also have its parity shifted version $\Pi(M)$ in $\mathcal{T}_n$. Since we only consider parity preserving morphisms, these two are not isomorphic. In particular the irreducible representations in $\mathcal{T}_{n}$ are given by the $\{L(\lambda), \Pi L(\lambda) \ | \ \lambda \in X^+ \}$. The whole category $\mathcal{T}_n$ decomposes as  $\mathcal{T}_{n} = \calR_{n} \oplus \Pi \calR_{n}$ \cite[Corollary 4.44]{Brundan-Kazhdan}. An object $M\in \mathcal{T}_n$ is called negligible, if it is the direct
sum of indecomposable objects $M_i$ in $\mathcal{T}_n$ with superdimensions $sdim(M_i)=0$. The thick ideal of negligible objects is denotes $\calN$ or $\calN_n$.

\subsection{Atypicality.} If $L(\lambda)$ is projective, the weight $\lambda$ is called typical. If not, $\lambda$ is called atypical. The atypicality of a weight can be measured by a number between $0$ and $n$ \cite{Kac-Rep}. If the atypicality is $n$, we say the weight is maximal atypical. An example is the Berezin determinant  $Ber$ of dimension $1$. More generally an irreducible representation is maximal atypical if and only if $\lambda$ is of the form \[ \lambda = (\lambda_1,\ldots,\lambda_n \ | \ -\lambda_n,\ldots,-\lambda_1).\] In this case we often write $[\lambda_1,\ldots,\lambda_n]$ for $L(\lambda)$. We define for $i \geq 0$ \[ S^i = [i,0,\ldots,0]  \ \in \mathcal{R}_n.\] The superdimension of an irreducible representation is non-zero if and only if $L(\lambda)$ is  maximal atypical \cite{Serganova-kw} \cite{Weissauer-gl}.The abelian categories $\mathcal{T}_{n}$ and $\calR_n$ decompose into blocks and the degree of atypicality is a block-invariant.

\section{Mixed tensors} \label{stable0}

The decompositon of $S^i \otimes S^j$ is obtained from the decomposition $\A_{S^i} \otimes \A_{S^j}$ where the $\A_{S^i}$ are mixed tensors. We review some facts about them.

\subsection{Indecomposable representations and combinatorics of bipartitions} Let $\mathcal{MT}$ denote the full subcategory of mixed tensors in ${\calR_n}$ whose objects are direct sums of the indecomposable objects in ${\calR}_n$ that appear in a decomposition $V^{\otimes r} \otimes (V^{\vee})^{\otimes s}$ for some natural numbers $r,s \geq 0$, where $V \in {\calR_n}$ denotes the standard representation. By \cite[Theorem 8.19]{Brundan-Stroppel-5} and \cite[Theorem 8.18]{Comes-Wilson} the indecomposable objects in $\mathcal{MT}$ are parametrized by $(n|n)$-cross bipartitions (see below). Let $R_n(\lambda)$ (or $R(\lambda)$ if the dependency on $n$ is clear) denote the indecomposable representation in ${\calR}_n$ corresponding to the bipartition $\lambda = (\lambda^L ,\lambda^R)$ under this parametrization. We sometimes write $R(\lambda^L, \lambda^R)$ to avoid brackets. To any bipartition we attach a weight diagram in the sense of \cite{Brundan-Stroppel-1}, i.e. a labelling of the numberline $\mathbb Z$ according to the following dictionary. Put \[ I_{\wedge}(\lambda)  := \{ \lambda_1^L, \lambda_2^L - 1, \lambda_3^L - 2, \ldots \} \quad \text{and}\quad  I_{\vee}(\lambda)  := \{1  -\lambda_1^R, 2 - \lambda_2^R, \ldots \}\ . \] Now label the integer vertices $i$ on the numberline by the symbols $\wedge, \vee, \circ, \times$ according to the rule \[ \begin{cases} \circ \quad \text{ if } \ i \  \notin I_{\wedge} \cup I_{\vee}, \\ \wedge \quad \text{ if } \ i \in I_{\wedge}, \ i \notin I_{\vee}, \\ \vee \quad \text{ if } \ i \in I_{\vee}, \ i \notin I_{\wedge}, \\ \times \quad \text{ if } \ i \in I_{\wedge} \cap I_{\vee}. \end{cases} \]  To any such data one attaches a cup-diagram as in \cite[6.3]{Comes-Wilson} or \cite{Brundan-Stroppel-1} and we define the following three invariants \begin{align*} rk(\lambda) & = \text{ number of crosses } \\ d(\lambda) & = \text{ number of cups } \\ k(\lambda) & = rk(\lambda) + d(\lambda). \end{align*} 

\begin{example} \label{example-cup} Let $\lambda = (3,1^3)$. Then its weight diagram is 
\medskip

\begin{center}
\begin{tikzpicture}
 \draw (-6,0) -- (6,0);
\foreach \x in {0,1,2,4,5} 
     \draw (\x-.1, .2) -- (\x,0) -- (\x +.1, .2);
\foreach \x in {-1,-2,-3,-4,3} 
     \draw (\x-.1, -.2) -- (\x,0) -- (\x +.1, -.2);
\foreach \x in {} 
     \draw (\x-.1, .1) -- (\x +.1, -.1) (\x-.1, -.1) -- (\x +.1, .1);

\end{tikzpicture}
\end{center}
\medskip

with the rightmost $\wedge$ at the vertex 3. Its cup diagram is 

\medskip
\begin{center}
  \begin{tikzpicture}
 \draw (-6,0) -- (6,0);
\foreach \x in {} 
     \draw (\x-.1, .2) -- (\x,0) -- (\x +.1, .2);
\foreach \x in {} 
     \draw (\x-.1, -.2) -- (\x,0) -- (\x +.1, -.2);
\foreach \x in {} 
     \draw (\x-.1, .1) -- (\x +.1, -.1) (\x-.1, -.1) -- (\x +.1, .1);

\draw [-,black,out=270,in=270](2,0) to (3,0);

\end{tikzpicture}
\end{center}
\medskip

Therefore $k(\lambda) =1$, $rk(\lambda) = 0$ and $d(\lambda) = 1$. For $\lambda = ((4,2), (2^2, 1^{2}))$ the weight diagram is

\medskip

\begin{center}
\begin{tikzpicture}
 \draw (-6,0) -- (6,0);
\foreach \x in {-1,0,2,3,5} 
     \draw (\x-.1, .2) -- (\x,0) -- (\x +.1, .2);
\foreach \x in {4,1,-2,-3,-4} 
     \draw (\x-.1, -.2) -- (\x,0) -- (\x +.1, -.2);
\foreach \x in {} 
     \draw (\x-.1, .1) -- (\x +.1, -.1) (\x-.1, -.1) -- (\x +.1, .1);

\end{tikzpicture}
\end{center}
\medskip

with the two rightmost $\wedge$'s at the vertices $4$ and $0$. Its cup diagram is 

\medskip
\begin{center}
  \begin{tikzpicture}
 \draw (-6,0) -- (6,0);
\foreach \x in {} 
     \draw (\x-.1, .2) -- (\x,0) -- (\x +.1, .2);
\foreach \x in {} 
     \draw (\x-.1, -.2) -- (\x,0) -- (\x +.1, -.2);
\foreach \x in {} 
     \draw (\x-.1, .1) -- (\x +.1, -.1) (\x-.1, -.1) -- (\x +.1, .1);

\draw [-,black,out=270,in=270](3,0) to (4,0);
\draw [-,black,out=270,in=270](0,0) to (1,0);

\end{tikzpicture}
\end{center}
\medskip

Therefore $k(\lambda) =2$, $rk(\lambda) = 0$ and $d(\lambda) = 2$.
\end{example}

A bipartition is said to be $(n|n)$-cross if and only if $k(\lambda) \leq n$. By \cite[Lemma 8.18]{Brundan-Stroppel-5} the modules $R( \lambda^L, \lambda^R)$ have irreducible socle and cosocle equal to $L(\lambda^{\dagger})$ where the highest weight $\lambda^{\dagger}$ can be obtained by a combinatorial algorithm from $\lambda$. Let $\theta: \Lambda \to X^+(n)$ denote the resulting map $\lambda \mapsto \lambda^\dagger$ between the set of $(n|n)$-cross bipartitions $\Lambda$ and the set $X^+(n)$ of highest weights of $\calR_n$.

\begin{thm} \cite[Corollary 5.4, Theorem 5.12]{Heidersdorf-mixed-tensors} \label{mixed-structure} $R = R(\lambda^L,\lambda^R)$ is an indecomposable module of Loewy length $2 d(\lambda) + 1$. It is projective if and only if $k(\lambda) = n$ in which case we have $R = P(\lambda^{\dagger})$. In particular $R(\lambda)$ is irreducible if $d(\lambda) = 0$.
\end{thm}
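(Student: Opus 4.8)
The plan is to reduce everything to the combinatorics of the generalised Khovanov arc algebras via the dictionary of \cite{Brundan-Stroppel-5}. Put $M = V^{\otimes r}\otimes (V^\vee)^{\otimes s}$ and $E = \End_{\calR_n}(M)$. By \cite{Brundan-Stroppel-5} the algebra $E$ is a quotient of the walled Brauer algebra $B_{r,s}(0)$ and is graded cellular, its structure being controlled by the weight and cup diagrams of the $(n|n)$-cross bipartitions; moreover $\Hom_{\calR_n}(M,-)$ restricts to an equivalence between the additive category of direct summands of $M$ and the category of projective $E$-modules. Under this equivalence $e_\lambda$ is a primitive idempotent and $R(\lambda)$ corresponds to the indecomposable projective $E e_\lambda$, so that $\End_{\calR_n}(R(\lambda)) = e_\lambda E e_\lambda$ is local; this yields the indecomposability (which is in any case already built into the parametrisation), and $\theta$ is precisely Brundan--Stroppel's rule reading off the label of the simple head and socle $L(\lambda^\dagger)$ of $E e_\lambda$.

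For the Loewy length I would exploit the grading. By \cite{Brundan-Stroppel-5} the algebra $E$ is non-negatively graded with semisimple degree-zero part and is generated in degrees $\le 1$ (it is Koszul); hence for the cyclic graded module $R(\lambda) = E e_\lambda$ the radical filtration coincides with the grading filtration, so that $\mathrm{rad}^j R(\lambda) = \bigoplus_{i \ge j} R(\lambda)_i$ and the Loewy length of $R(\lambda)$ is one more than the top degree of $E e_\lambda$. This top degree is read off the diagram basis: a basis of $E e_\lambda$ is given by oriented diagrams obtained by stacking a cap diagram onto the cup diagram of $\lambda$, the degree of such an element being twice the number of its clockwise circles. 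The circle count is maximised by stacking the mirror image of the cup diagram of $\lambda$ onto itself --- then the $d(\lambda)$ cups close up into exactly $d(\lambda)$ circles, while the crosses and rays of the diagram only produce propagating strands --- and orienting every circle clockwise gives degree $2 d(\lambda)$. Hence the Loewy length of $R(\lambda)$ equals $2 d(\lambda)+1$.

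For the projectivity criterion I would compare Loewy lengths inside $\calR_n$. Since $R(\lambda)$ has simple head $L(\lambda^\dagger)$, it is a quotient $P(\lambda^\dagger)/I$ of the projective cover. The block containing $L(\lambda^\dagger)$ is equivalent to the maximally atypical block of $Gl(a|a)$, where $a$ denotes its degree of atypicality; there $P(\lambda^\dagger)$ is injective --- in particular it has simple socle --- and its radical length is exactly $2a+1$. A standard argument using the simple socle then shows that every proper quotient of $P(\lambda^\dagger)$ has strictly smaller Loewy length, so $R(\lambda) = P(\lambda^\dagger)$ if and only if $2 d(\lambda)+1 = 2a+1$, i.e. if and only if $d(\lambda)=a$; and since any indecomposable projective object with head $L(\lambda^\dagger)$ must be $P(\lambda^\dagger)$, it follows that $R(\lambda)$ is projective if and only if $d(\lambda)=a$. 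It then remains to check, by running through the algorithm $\theta$, that the $rk(\lambda)$ crosses of the weight diagram of $\lambda$ are exactly the ``typical directions'' that $\theta$ cannot convert into atypicality, so that $L(\lambda^\dagger)$ lies in a block of atypicality $a = n - rk(\lambda)$; then $d(\lambda)=a$ holds if and only if $d(\lambda)+rk(\lambda)=n$, i.e. if and only if $k(\lambda)=n$, and in that case $R(\lambda) = P(\lambda^\dagger)$.

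The main obstacle is the Loewy-length step: one must justify that the radical filtration of $E e_\lambda$ really is the grading filtration (this is where generation of the arc algebra in degrees $\le 1$, i.e. Koszulity, is indispensable), and one must verify the combinatorial fact that a cap diagram stacked onto the cup diagram of $\lambda$ produces at most $d(\lambda)$ circles, so that crosses and rays contribute nothing to the top degree. By comparison the identity $a = n - rk(\lambda)$ is an elementary check with the explicit cup-diagram description of $\theta$.
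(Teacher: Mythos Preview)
The paper does not give a proof of this statement; it is quoted from \cite{Heidersdorf-mixed-tensors} with no argument supplied here. So your proposal is not being compared against a proof in the present paper but stands on its own.

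There is a genuine gap in the Loewy--length step. The equivalence coming from \cite{Brundan-Stroppel-5} between direct summands of $M=V^{\otimes r}\otimes (V^\vee)^{\otimes s}$ and finitely generated projective $E$-modules (for $E=\End_{\calR_n}(M)$) is only an equivalence of \emph{additive} categories. It transports indecomposability and endomorphism rings, but it has no reason to transport Loewy filtrations: the Loewy length of $R(\lambda)$ is taken in the abelian category $\calR_n$, whereas the radical/grading filtration you compute lives in $E$\text{-mod}, and $M$ is not a projective generator of the block of $\calR_n$ in question. Concretely, $Ee_\lambda$ is projective in $E$\text{-mod} for \emph{every} $(n|n)$-cross bipartition $\lambda$, so the number $2d(\lambda)+1$ you extract from it cannot by itself detect whether $R(\lambda)$ is projective in $\calR_n$; something extrinsic to $E$ is needed. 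What is actually required is the other half of the Brundan--Stroppel picture: the equivalence of the relevant block of $\calR_n$ with modules over a Khovanov arc algebra $K$ from \cite{Brundan-Stroppel-4}, together with an identification of the image of $R(\lambda)$ under that equivalence as an explicit graded $K$-module. Your cup/cap degree count and the appeal to Koszulity are exactly the right ingredients, but they must be run on the $K$-side rather than on the walled--Brauer side; this is essentially how \cite{Heidersdorf-mixed-tensors} proceeds.

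Your projectivity argument, by contrast, is sound once the Loewy length $2d(\lambda)+1$ has been established by a correct route: $P(\lambda^\dagger)$ is projective--injective with simple socle and Loewy length $2a+1$ ($a$ the atypicality of the block), so any nonzero proper submodule contains the socle and the quotient has strictly smaller Loewy length; hence $R(\lambda)\cong P(\lambda^\dagger)$ iff $d(\lambda)=a$. The identity $a=n-rk(\lambda)$ is then, as you say, a direct inspection of $\theta$.
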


\subsection{The map $\lambda \mapsto \lambda^{\dagger}$} \label{sec:theta} We recall the explicit description of the map $\theta$ as in \cite[Section 6.1]{Heidersdorf-mixed-tensors}, i.e. we describe how to transform the weight diagram of the bipartition $\lambda$ into the weight diagram of the highest weight $\lambda^{\dagger}$. Define $M$ to be the largest vertex labelled with a $\times$ or $\circ$ or part of a cup in the weight diagram of $\lambda$ and put \[ T= max(k(\lambda) + 1, M+1).\]  Now define \begin{align*} X= \begin{cases} 0 & M+1 \leq k(\lambda) + 1 \\ M- k(\lambda) & \text{else.} \end{cases} \end{align*} We say a vertex is free if it does not have a cross, or a circle or is not part of a cup. 

\begin{cor} (\cite[Corollary 6.3]{Heidersdorf-mixed-tensors}) The weight diagram of $\lambda^{\dagger}$ is obtained from the weight diagram of $\lambda$ by switching all $\vee$'s to $\wedge$'s and vice versa at vertices $\geq T$ and switching all $\vee$'s to $\wedge$'s and vice versa at the first $X + n-k(\lambda)$ free vertices $< T$. 
\end{cor}


\subsection{Deligne's interpolating category} For every $\delta \in k$ we denote by $\underline{Rep}(GL_{\delta})$ the interpolating category defined in  \cite{Deligne-interpolation}. This is a $k$-linear pseudoabelian rigid symmetric monoidal category. By construction it contains an object $st$ of dimension $\delta$, called the standard representation. By the universal property \cite[Proposition 10.3]{Deligne-interpolation} of the Deligne category we have a tensor functor $F_n = F_{n|n}: \underline{Rep}(GL_0) \to {\calR}_n$ mapping the standard representation of $\underline{Rep}(GL_0)$ to the standard representation of $GL(n\vert n)$ in ${\calR}_n$.  Every mixed tensor is in the image of this tensor functor \cite[8.13]{Comes-Wilson}. The indecomposable objects in $\underline{Rep}(GL_{\delta})$ are parametrized by bipartitions \cite{Comes-Wilson} and we denote by $R(\lambda)$ the indecomposable element associated to the bipartition $\lambda$. Then \cite[8.3]{Comes-Wilson} \[ F_n: R(\lambda) \mapsto \begin{cases} R_n(\lambda)  & \text{ if } k(\lambda) \leq n \\ 0 & \text{ if } k(\lambda) > n. \end{cases} \]  The atypicality of $R_n(\lambda)$ is given by $n - rk(\lambda)$ \cite{Heidersdorf-mixed-tensors}. Note that the superdimension of every nontrivial mixed tensor vanishes since $sdim(V) = 0$.

\section{The symmetric and alternating powers}\label{sec:symmetric-times-symmetric}

We define as in \cite{Heidersdorf-mixed-tensors} the following indecomposable modules in $\calR_n$ \[ \A_{S^i} = R(i;1^i) \text{ and } \A_{\Lambda^i}= (\A_{S^i})^{\vee} = R(1^i;i).\] The aim of this section is to prove prove a formula for $\A_{S^i} \otimes \A_{S^j}$ in $\mathcal{R_n}$ by calculations in $\underline{Rep}GL_0$. We neglect in this section summands that are not maximal atypical.

\subsection{Loewy layers and the $GL(1|1)$-case} Recall that we defined $S^i = [i,0,\ldots,0]$ for integers $i \geq 1$. We denote the trivial representation $S^0$ by $\one$. Furthermore we denote the projective cover of $[\lambda]$ by $P[\lambda]$.

\begin{lem}\cite[Lemma 13.3]{Heidersdorf-mixed-tensors} \label{lem:const} The Loewy structure of the $\A_{S^i}$ is given by ($n \geq 2$) \begin{align*} \A_{S^1} & = (\one, S^1, \one) \\ \A_{S^i} & = (S^{i-1}, S^i \oplus S^{i-2}, S^{i-1}) \quad 1<i \neq n \\ \A_{S^n} & = (S^{n-1}, S^{n} \oplus S^{n-2} \oplus B^{-1}, S^{n-1}). \end{align*} In particular $S^i$ is the constituent of highest weight in $\A_{S^i}$.
\end{lem}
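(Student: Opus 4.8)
The plan is to compute the socle filtration of $\A_{S^i} = R(i;1^i)$ by combining the general structural result (Theorem, \cite{Heidersdorf-mixed-tensors}) with the combinatorics of weight and cup diagrams. First I would write down the weight diagram attached to the bipartition $\lambda = (i; 1^i)$: from $I_\wedge(\lambda) = \{i, -1, -2, -3, \ldots\}$ and $I_\vee(\lambda) = \{0, 1, 0, -1, \ldots\}$ (more carefully, $I_\vee(\lambda) = \{1 - 1, 2 - 1, 3 - 0, \ldots\} = \{0, 1, 3, 4, 5, \ldots\}$) one reads off the labels on $\mathbb Z$. The key numerical facts I expect to extract are $d(\lambda) = 1$ (exactly one cup) and $rk(\lambda)$ equal to $0$ for $i < n$ and $1$ for $i = n$, so that $k(\lambda) = 1 < n$ when $i < n$ and $k(\lambda) = n$ when $i = n$. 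By the Theorem this already gives Loewy length $2 d(\lambda) + 1 = 3$ in all cases, and shows $\A_{S^n} = P(S^n)$ is projective (indeed self-dual), while for $i < n$ the module is not projective. It also identifies socle and cosocle: $\theta(\lambda) = \lambda^\dagger$ must be computed by the Brundan–Stroppel algorithm, and I expect it to come out to the highest weight of $S^{i-1}$, matching the claimed outer layers $(S^{i-1}, -, S^{i-1})$.

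Next I would pin down the middle Loewy layer. Since the module has length $3$ with known top and bottom $S^{i-1}$, the middle layer is determined by its composition factors, which can be obtained from the Euler characteristic / class in $K_0$ together with a count of the total number of composition factors (the latter follows from $d(\lambda)$ and the cup-diagram combinatorics of \cite{Brundan-Stroppel-5}, which gives the radical layers of $R(\lambda)$ explicitly in terms of which vertices get relabeled when cups are "flipped"). Concretely, flipping the single cup produces exactly the weights appearing in the Grothendieck class, and translating these back through $\theta$ should yield $S^i$ and $S^{i-2}$ generically, with the extra summand $B^{-1} = Ber^{-1}$ appearing precisely when $i = n$ because the vertex configuration then allows the Berezin-type weight (this is exactly the manifestation of $rk(\lambda)$ jumping to $1$). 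The base case $i = 1$, where $S^{i-2} = S^{-1}$ is not defined, has to be handled separately: there the bipartition is $(1;1)$, $I_\wedge \cap I_\vee$ forces the appropriate degeneration, and one reads off $\A_{S^1} = (\one, S^1, \one)$ directly.

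The last step is to verify that the middle layer is \emph{semisimple} (a direct sum, as written with $\oplus$) rather than having further internal structure — but this is automatic once Loewy length $3$ is established, since the radical-layer filtration of \cite{Brundan-Stroppel-5} is by construction semisimple. I would also double-check consistency via superdimensions: $sdim \A_{S^i} = sdim R(i;1^i)$ can be computed from the interpolating category $Rep(Gl_0)$ via the functor $F_n$, and it must equal $2\, sdim(S^{i-1}) + sdim(S^i) + sdim(S^{i-2})$ (or with the $B^{-1}$ term for $i = n$, where everything is projective and the superdimensions conveniently cancel). The main obstacle I anticipate is the faithful bookkeeping in the cup-diagram calculus: correctly computing $I_\wedge, I_\vee$, the cup diagram, the map $\theta$, and the radical layers for the specific family $(i;1^i)$, and in particular cleanly isolating \emph{why} the $i = n$ case is the unique one producing the Berezin summand. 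Since all of this is spelled out for general mixed tensors in \cite{Heidersdorf-mixed-tensors} and \cite{Brundan-Stroppel-5}, the proof is really a specialization of those results to this one-parameter family, and the statement follows by citing them after the diagram computation.
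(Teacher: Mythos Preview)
The paper gives no proof here; the lemma is simply quoted from \cite{Heidersdorf-mixed-tensors}. Your idea of deriving it by specializing the general Loewy-length theorem together with the Brundan--Stroppel cup combinatorics is exactly what that reference does, so in spirit you are on the right track.

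However, your combinatorial analysis contains a genuine error that would make the argument break down. For $\lambda=(i;1^i)$ one has $I_\wedge=\{i,-1,-2,\dots\}$ and $I_\vee=\{0,1,\dots,i-1,i+1,i+2,\dots\}$, so $I_\wedge\cap I_\vee=\emptyset$ and there are no circles either: the weight diagram has \emph{no} crosses for any $i$. Thus $rk(\lambda)=0$ and $k(\lambda)=d(\lambda)=1$ for every $i$; this is in fact stated later in the paper (proof of Lemma~\ref{gl-1-1}). In particular, for $n\geq 2$ one never has $k(\lambda)=n$, and $\A_{S^n}$ is \emph{not} projective --- it still has Loewy length $3$ with socle and top $S^{n-1}$, exactly as the statement asserts. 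Your proposed mechanism (``$rk(\lambda)$ jumps to $1$ at $i=n$, forcing projectivity and producing $B^{-1}$'') is therefore incorrect on both counts.

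The extra $B^{-1}$ in the middle layer at $i=n$ does not come from the bipartition side at all, since that data is independent of $n$. It arises instead on the $Gl(n\vert n)$ side: the composition factors of $R(\lambda)$ are computed via the map $\theta$ into $X^+(n)$ and the associated $Gl(n\vert n)$ weight-diagram combinatorics, and it is there that the case $i=n$ becomes special (the relevant weight $[n,0,\dots,0]$ sits at the boundary where an additional irreducible of Berezin type enters the Jordan--H\"older series). So the correct fix is to keep your overall plan, drop the $rk$/projectivity claim, and locate the $i=n$ degeneration in the $\theta$-combinatorics of \cite{Heidersdorf-mixed-tensors} rather than in the bipartition invariants.
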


We remark that mixed tensors are always rigid \cite[Corollary 5.4]{Heidersdorf-mixed-tensors}. These representations are maximal atypical for any $n$. We now derive a closed formula for the tensor products $\A_{S^i} \otimes \A_{S^j}$. It turns out that the maximal atypical summands are not irreducible whereas all other summands are irreducible. Therefore we split the computations in two parts: we first compute the projection to the maximal atypical block of $\A_{S^i} \otimes \A_{S^j}$ and deal with the remaining easier case later in section \ref{sec:gl(2|2)-lower-atypicality}. In the following formulas we often project to the maximal atypical block. Recall from \cite[Proposition 11.1]{Heidersdorf-mixed-tensors} that a mixed tensor $R(\lambda^L,\lambda^R)$ is maximal atypical if and only if $\lambda^R = (\lambda^L)^*$ where $\lambda^*$ denotes the conjugate partition. In this case we simply use the notation $R(\lambda^L)$, e.g. $\A_{S^i} = R(i)$ and $\A_{\Lambda^i} = R(1^i)$. 


\begin{lem} The atypical mixed tensors in $\calR_1$ are the $\A_{S^i}$ and their duals $\A_{\Lambda^j}$. They are the projective covers $\A_{S^i} = P[i-1]$ and $\A_{\Lambda^j} = P[-j+1]$.
\end{lem}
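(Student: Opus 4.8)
The plan is to exploit the exceptional feature of $Gl(1|1)$ that the standard representation is already projective, which collapses the classification of atypical mixed tensors to a short list of projective covers. First I would note that $V \in \calR_1$ has highest weight $(1\,|\,0)$, which is typical since $\lambda_1 + \lambda_2 = 1 \neq 0$; hence $V$ is a projective object of $\calR_1$, and so is its dual $V^{\vee} = L(0\,|\,-1)$. Since $\calR_1$ is a tensor subcategory of $T_1$ which is a union of blocks, every $V^{\otimes r} \otimes (V^{\vee})^{\otimes s}$ with $r+s \geq 1$ is projective in $\calR_1$; consequently every indecomposable mixed tensor other than $\one = R(\emptyset) = \A_{S^0}$ is projective, and $\one$ itself occurs as a summand of $V^{\otimes r}\otimes(V^{\vee})^{\otimes s}$ only for $r=s=0$ (otherwise it would be projective, which it is not). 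The indecomposable projectives of $\calR_1$ are the typical simples (which are projective) together with the projective covers $P[m]$, $m \in \mathbb{Z}$, of the atypical simples $[m]$; hence the atypical objects of $MT$ are precisely $\one$ together with those $P[m]$ that occur as summands of some mixed tensor space. It therefore suffices to prove $\A_{S^i} = P[i-1]$ for $i \geq 1$ and, dually, $\A_{\Lambda^j} = P[-j+1]$ for $j \geq 1$: once this is known, $\{P[i-1] : i\geq 1\} \cup \{P[-j+1]:j\geq1\}$ already exhausts all the $P[m]$, so no further atypical mixed tensor can occur.

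To identify $\A_{S^i} = R(i;1^i)$ I would compute its weight diagram as in Section~\ref{stable0}: one finds $I_{\wedge}(i;1^i) = \{i\}\cup\{-1,-2,\ldots\}$ and $I_{\vee}(i;1^i) = \{0,1,\ldots,i-1\}\cup\{i+1,i+2,\ldots\}$, so the diagram has no crosses and exactly one cup, which gives $rk = 0$, $d = 1$ and $k = 1 = n$. By the structure theorem for mixed tensors recalled in Section~\ref{stable0}, $\A_{S^i}$ is then projective of Loewy length $3$ and equal to $P(\theta(i;1^i))$. Evaluating $\theta$ on this bipartition — equivalently, reading off the socle of $\A_{S^i}$ from \cite{Heidersdorf-mixed-tensors} — gives $\theta(i;1^i) = [i-1]$, whence $\A_{S^i} = P[i-1]$, which is atypical. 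Applying the duality $(-)^{\vee}$, a contravariant autoequivalence of $\calR_1$ satisfying $(P[m])^{\vee} = P[-m]$ (since $[m]^{\vee} = [-m]$), gives $\A_{\Lambda^j} = (\A_{S^j})^{\vee} = (P[j-1])^{\vee} = P[-j+1]$, which is likewise atypical. Combined with the first paragraph this yields the statement, with the value $i = 0$ accounting for $\one$.

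The only step that is not routine bookkeeping is the identification $\theta(i;1^i) = [i-1]$: either one runs the (here very short) combinatorial algorithm $\lambda \mapsto \lambda^{\dagger}$ in this special case, or one simply quotes the socle and Loewy structure of the $\A_{S^i}$ from \cite{Heidersdorf-mixed-tensors}. The remaining care concerns parity — one must check that the atypical simples $[m]$, and hence the $P[m]$, genuinely lie in $\calR_1$, i.e.\ that $[m]$ is the correctly parity-shifted power of $Ber$ — but this follows directly from the defining condition $p_M = \rho(\epsilon_{11})$.
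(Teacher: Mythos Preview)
Your proof is correct and, for the identification $\A_{S^i}=P[i-1]$, uses exactly the paper's argument: $k(\lambda)=1=n$ forces projectivity by the structure theorem, and $\theta$ (equivalently, the known socle of $\A_{S^i}$) identifies the top as $[i-1]$. Where you go beyond the paper is in supplying an explicit argument for the first sentence of the lemma (exhaustiveness of the list), which the paper's one-line proof leaves implicit: your observation that $V$ is typical, hence projective, so that every nontrivial indecomposable mixed tensor is projective and thus some $P[m]$, together with the fact that the $\A_{S^i}$ and $\A_{\Lambda^j}$ already realize every $P[m]$, closes this gap cleanly. The duality step $(\A_{S^j})^\vee = P[-j+1]$ is a harmless shortcut over computing $\theta(1^j;j)$ directly, and the typicality argument for projectivity is correct but redundant once you have $k(\lambda)=1$.
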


\begin{proof} Any mixed tensor $R(\lambda)$ in $\mathcal{R}_1$ satisfies $k(\lambda) \leq 1$. If $k(\lambda) = 0$, then $R(\lambda)$ is irreducible and singly atypical and therefore $R(\lambda) \cong \one$. If $k(\lambda)= 1$, then it is projective by theorem \ref{mixed-structure}. If $rk(\lambda) = 1$, $R(\lambda)$ is typical. The statement about the top follows from an explicit computation of the map $\theta: \Lambda \to X^+$ \cite[6.1]{Heidersdorf-mixed-tensors} already done in the proof of \cite[Lemma 13.3]{Heidersdorf-mixed-tensors}. 
\end{proof}

\begin{cor} In $\calR_1$ \begin{align*} \A_{S^i} \otimes \A_{\Lambda^j} = & \A_{S^{|-i+j|+2}} \oplus 2 \A_{S^{|-i+j|+1}} \oplus \A_{S^{|-i+j|}} \\
\A_{S^i} \otimes \A_{S^j} = & \A_{S^{i+j}} \oplus 2 \cdot \A_{S^{i+j-1}} \oplus \A_{S^{i+j-2}} \end{align*}
\end{cor}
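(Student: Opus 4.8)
The plan is to exploit projectivity. By the preceding lemma $\A_{S^i}\cong P[i-1]$ and $\A_{\Lambda^j}\cong P[-j+1]$ are projective objects of $\calR_1$, so both $\A_{S^i}\otimes\A_{\Lambda^j}$ and $\A_{S^i}\otimes\A_{S^j}$ are projective. Their composition factors are tensor products of the composition factors of the two tensorands, hence all of the form $Ber^\ell$; in particular the products lie entirely in the maximal atypical block of $\calR_1$, whose indecomposable projectives are exactly the $P[\ell]$, $\ell\in\bbZ$ (with $P[\ell]\cong\A_{S^{\ell+1}}$ for $\ell\ge 0$ and $P[\ell]\cong\A_{\Lambda^{1-\ell}}$ for $\ell\le 0$, overlapping in $P[0]\cong\A_{S^1}\cong\A_{\Lambda^1}$). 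So each product is a direct sum of $P[\ell]$'s and only the multiplicities remain to be found.

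Since a projective $Q$ satisfies $Q\cong\bigoplus_L P(L)^{\oplus\dim\Hom(Q,L)}$, I would compute $\dim\Hom$ into each simple $Ber^\ell$. By rigidity together with $(\A_{\Lambda^j})^\vee\cong\A_{S^j}$ and $(\A_{S^j})^\vee\cong\A_{\Lambda^j}$ one has $\dim\Hom(\A_{S^i}\otimes\A_{\Lambda^j},Ber^\ell)=\dim\Hom(\A_{S^i},\A_{S^j}\otimes Ber^\ell)$ and $\dim\Hom(\A_{S^i}\otimes\A_{S^j},Ber^\ell)=\dim\Hom(\A_{S^i},\A_{\Lambda^j}\otimes Ber^\ell)$. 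As $Ber^\ell$ is invertible and one-dimensional, $\A_{S^j}\otimes Ber^\ell\cong P[j-1+\ell]$ and $\A_{\Lambda^j}\otimes Ber^\ell\cong P[-j+1+\ell]$, so these dimensions equal the multiplicity of the simple top of $\A_{S^i}=P[i-1]$ in $P[j-1+\ell]$, resp. $P[-j+1+\ell]$. The composition series of $P[m]$ in $\calR_1$ is $(\,[m],\,[m-1]\oplus[m+1],\,[m]\,)$ — read off from the Loewy lemma applied to $\A_{S^{m+1}}\cong Ber^{m}\A_{S^1}$ — so each such multiplicity is $2$ when the relevant index equals $i-1$, is $1$ when it differs from $i-1$ by $\pm1$, and is $0$ otherwise. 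This yields $\A_{S^i}\otimes\A_{\Lambda^j}\cong 2P[i-j]\oplus P[i-j-1]\oplus P[i-j+1]$ and $\A_{S^i}\otimes\A_{S^j}\cong 2P[i+j-2]\oplus P[i+j-3]\oplus P[i+j-1]$. Rewriting the $P[\ell]$ in the $\A_{S^\bullet}$-notation, and for the first identity using duality to present the answer through the nonnegative index $|i-j|$ (for $i<j$ the three $P[\ell]$ have negative $\ell$ and are the duals of the corresponding $\A_{S^\bullet}$), gives the two stated formulas; the second translates directly since $i+j-2\ge 0$.

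A variant of the central step, closer to the mixed-tensor viewpoint, is to compute $\A_{S^1}^{\otimes 2}$ by hand and then Berezin-twist: in $\calR_1$ one has $\A_{S^1}\cong V\otimes V^\vee$, hence $\A_{S^1}^{\otimes 2}\cong V^{\otimes 2}\otimes(V^\vee)^{\otimes 2}$, and $V^{\otimes 2}\cong S^2V\oplus\Lambda^2V$ is a sum of two two-dimensional typical, hence projective, irreducibles (and likewise its dual); thus $\A_{S^1}^{\otimes 2}$ is a direct sum of four four-dimensional indecomposable projectives, each a single $P[\ell]$ identified by its highest weight, giving $\A_{S^1}^{\otimes 2}\cong 2P[0]\oplus P[1]\oplus P[-1]$. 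I expect the only genuine work, in either route, to be this small identification (the composition factors of $P[m]$, equivalently the decomposition of those four $4$-dimensional products); the rest is bookkeeping — tracking the parity normalisation so that everything stays inside $\calR_1$ (which is why no parity shift appears in the final formulas), and handling the small-index degeneracies ($i$ or $j$ equal to $0$ or $1$) and the passage to the $|i-j|$-form.
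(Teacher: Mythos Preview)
Your argument is correct and follows the same route as the paper: identify $\A_{S^i}=P[i-1]$ and $\A_{\Lambda^j}=P[-j+1]$ via the preceding lemma, then use the $Gl(1|1)$ formula $P[a]\otimes P[b]\cong P[a+b+1]\oplus 2P[a+b]\oplus P[a+b-1]$ and translate back. The only difference is that the paper simply cites this last formula from \cite{Goetz-Quella-Schomerus}, whereas you re-derive it from scratch via the standard projectivity/$\Hom$-multiplicity count; one small quibble is that the Loewy structure $([m],\,[m{-}1]\oplus[m{+}1],\,[m])$ of $P[m]$ in $\calR_1$ that you invoke does not literally follow from the Loewy lemma as stated in the paper (which assumes $n\geq 2$), though it is of course elementary for $Gl(1|1)$.
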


\begin{proof} This is just rewriting the known formula ($a,b \in \Z$) \[ P[a] \otimes P[b] = P[a+b+1] \oplus 2 P[a+b] \oplus P[a+b-1] \] from \cite{Goetz-Quella-Schomerus}.
\end{proof}

Let us assume from now on that $n \geq 2$.

\begin{lem}\label{gl-1-1} After projection to the maximal atypical block ($n \geq 2$) \begin{align*} \A_{S^i} \otimes \A_{\Lambda^j} = & \A_{S^{|-i+j|+2}} \oplus 2 \A_{S^{|-i+j|+1}} \oplus \A_{S^{|-i+j|}} \oplus R_1 \\ \A_{S^i} \otimes \A_{S^j} = & \A_{S^{i+j}} \oplus 2 \cdot \A_{S^{i+j-1}} \oplus \A_{S^{i+j-2}} \oplus R_2 \end{align*} where $R_1$ and $R_2$ are direct sums of modules which do not contain any $\A_{S^i}$ or $\A_{\Lambda^j}$.
\end{lem}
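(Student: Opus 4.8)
The plan is to lift the tensor product to Deligne's category $Rep(Gl_0)$, where the decomposition is independent of $n$, and then to extract the multiplicities of the relevant summands from the $Gl(1|1)$ fusion rule of the Corollary above. Write $R_0(\lambda)$ for the indecomposable object of $Rep(Gl_0)$ attached to a bipartition $\lambda$, so that by \cite{Comes-Wilson} (as used in \cite{Heidersdorf-mixed-tensors}) one has $F_n(R_0(\lambda)) = R_n(\lambda)$ when $\lambda$ is $(n|n)$-cross and $F_n(R_0(\lambda)) = 0$ otherwise. Since the bipartitions $(i;1^i)$ and $(1^j;j)$ have $k(\cdot) = 1$, they are $(n|n)$-cross for every $n \ge 1$, and $\A_{S^i} = R_n(i;1^i) = F_n(R_0(i;1^i))$, $\A_{\Lambda^j} = R_n(1^j;j) = F_n(R_0(1^j;j))$ compatibly in $n$. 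As $F_n$ is a tensor functor, $\A_{S^i}\otimes\A_{S^j} = F_n\!\bigl(R_0(i;1^i)\otimes R_0(j;1^j)\bigr)$, and by \cite{Comes-Wilson} we may write $R_0(i;1^i)\otimes R_0(j;1^j) = \bigoplus_\mu m_\mu\, R_0(\mu)$ in $Rep(Gl_0)$ with $n$-independent multiplicities $m_\mu$.

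Applying $F_n$ annihilates the summands indexed by $\mu$ that are not $(n|n)$-cross and gives $\A_{S^i}\otimes\A_{S^j} = \bigoplus_\mu m_\mu R_n(\mu)$, the sum running over the $(n|n)$-cross $\mu$. Since distinct $(n|n)$-cross bipartitions index nonisomorphic indecomposable mixed tensors, the multiplicity of $\A_{S^k} = R_n(k;1^k)$ (resp.\ of $\A_{\Lambda^l} = R_n(1^l;l)$) in $\A_{S^i}\otimes\A_{S^j}$ equals $m_{(k;1^k)}$ (resp.\ $m_{(1^l;l)}$), for every $n \ge 1$. Being $n$-independent, these multiplicities may be computed at $n = 1$, where $\A_{S^k}$ and $\A_{\Lambda^l}$ become the atypical projective covers $P[k-1]$ and $P[1-l]$; thus $m_{(k;1^k)}$ is the multiplicity of $P[k-1]$ in $P[i-1]\otimes P[j-1] = P[i+j-1]\oplus 2P[i+j-2]\oplus P[i+j-3]$ and $m_{(1^l;l)}$ that of $P[1-l]$, which one reads off exactly as in the Corollary. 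The summands $\A_{S^k} = R_n(k;1^k)$ are maximal atypical (as $(1^k) = (k)^*$), so they survive the projection to $\Gamma$; setting $R_2$ to be the maximal-atypical part of the remaining $\bigoplus m_\mu R_n(\mu)$, it contains no $\A_{S^k}$ and no $\A_{\Lambda^l}$, since such a summand would force the corresponding $m_{(k;1^k)}$ or $m_{(1^l;l)}$ to be nonzero. The first line is obtained in the same way from $R_0(i;1^i)\otimes R_0(1^j;j)$ and $P[i-1]\otimes P[1-j] = P[i-j+1]\oplus 2P[i-j]\oplus P[i-j-1]$.

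The step I expect to do the real work is the structural input on $F_n$: that $F_n(R_0(\lambda))$ equals $R_n(\lambda)$ or $0$ according as $\lambda$ is $(n|n)$-cross, and that $\A_{S^i}$ lifts through $F_n$ to $R_0(i;1^i)$ uniformly in $n$. Granting these consequences of \cite{Comes-Wilson} and \cite{Brundan-Stroppel-5} recorded in \cite{Heidersdorf-mixed-tensors}, the rest is bookkeeping; the only genuine subtlety is the degenerate corner of small indices (such as $i=j=1$ in the first product, or $i=j$ in the second), where some $P[\,\cdot\,]$ lands in the $\A_\Lambda$-range or equals $\one$ and the conventions introduced above must be invoked, although these cases are settled by the identical argument. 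Alternatively one could bypass the reduction to $n=1$ and read off $m_{(k;1^k)}$ and $m_{(1^l;l)}$ directly from the Comes--Wilson ``contract, then Littlewood--Richardson'' recipe applied to the one-row/one-column bipartitions $(i;1^i)$ and $(j;1^j)$ --- precisely the Pieri-type computation carried out in the rest of the paper --- but funnelling the statement through the $Gl(1|1)$ rule is shorter.
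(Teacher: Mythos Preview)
Your proof is correct and follows essentially the same route as the paper: lift the tensor product to $Rep(Gl_0)$ via the tensor functor $F_n$, use that $F_n(R_0(\lambda))$ is $R_n(\lambda)$ or zero according as $\lambda$ is $(n|n)$-cross, and read off the multiplicities of the $\A_{S^k}$ and $\A_{\Lambda^l}$ summands from the known $Gl(1|1)$ fusion rule. The paper phrases the last step by noting that the kernel of $F_1$ consists of the $R(\lambda)$ with $k(\lambda)>1$ and that the maximal atypical mixed tensors with $k(\lambda)=1$ are precisely the $\A_{S^i}$ and $\A_{\Lambda^j}$, which is exactly the fact underlying your Krull--Schmidt identification of $m_{(k;1^k)}$ and $m_{(1^l;l)}$ at $n=1$.
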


\begin{proof} This follows from the $GL(1|1)$-case and the identification between the projective covers and the symmetric and alternating powers. In $GL(1|1)$ \cite{Goetz-Quella-Schomerus} \[P[a] \otimes P[b] = P[a+b-1] \oplus 2 P[a+b] \oplus P[a+b+1]. \] Hence this formula holds for the corresponding $\A_{S^i}$ respectively $\A_{\Lambda^j}$. It then holds in $\underline{Rep}(GL_0)$ up to summands in the kernel of the functor $F_{1}: \underline{Rep}(GL_0) \to Rep(GL(1|1))$ of section \ref{stable0}. The kernel consists of the $R(\lambda)$ with $k(\lambda) > 1$. By \cite[Lemma 13.1]{Heidersdorf-mixed-tensors} a maximal atypical mixed tensor satisfies $d(\lambda) = 1$ (and hence $k(\lambda) = 1$) if and only if and only if $\lambda = (i;1^i)$ or $\lambda = (1^i;i)$. Hence this formula holds in any $Rep(GL(n|n))$ up to contributions which lie in the kernel of $F_{n|n}: \underline{Rep}(GL_0) \to Rep(GL(n|n))$ and which are not $(1|1)$-cross. 
\end{proof}

\subsection{Tensor products in Deligne's category} In order to compute $\A_{S^i} \otimes \A_{S^j}$ in $\mathcal{R}_n$ we compute $R(i) \otimes R(j)$ in $\underline{Rep}(GL_0)$. We then push the result to $Rep(GL(n|n))$ using $F_{n}$. We recall the tensor product decomposition in $\underline{Rep}(GL_0)$.

\subsubsection{The lifting map} We attach to the weight diagram of a bipartition a cap-diagram as in \cite{Brundan-Stroppel-1} \cite{Comes-Wilson}. We denote the degree $\sum_i \lambda_i$ of a partition by $|\lambda|$. If $|\lambda| =n$ we write $\lambda \vdash n$. If $\lambda = (\lambda^L,\lambda^R)$ is a bipartition we denote its degree $(|\lambda^L|, |\lambda^R|)$ by $|\lambda|$ and we write $\lambda \vdash (r,s)$ if $|\lambda^L| = r$ and $|\lambda^R| = s$. Let us fix a bipartition $\lambda$ and consider the associated weight and cup diagram. For integers $i<j$ we say that $(i,j)$ is a $\vee \wedge$-pair if they are joined by a cap. For $\lambda, \mu \in \Lambda$ we say that $\mu$ is linked to $\lambda$ if there exists an integer $k \geq 0$ and bipartitions $\nu^{(n)}$ for $0 \leq n \leq k$ such that $\nu^{(0)} = \lambda,  \nu^{(k)} = \mu$ and the weight diagramm of $\nu^{(n)}$ is obtained from the one of $\nu^{(n-1)}$ by swapping the labels of some pair $\vee \wedge$-pair. Then we put \[ D_{\lambda, \mu} = \begin{cases} 1 \ \ \mu \text{ is linked to } \lambda \\ 0 \ \ \text{otherwise.} \end{cases} \] Then one has $D_{\lambda,\lambda} = 1$ for all  $\lambda$. Further $D_{\lambda,\mu} =  0$ unless $\mu = \lambda$ or $|\mu| = (|\lambda^L| - i, |\lambda^R| - i)$ for some $i> 0$ (see \cite[Theorem 6.2.3]{Comes-Wilson} for further details).  

\begin{example} We saw in example \ref{example-cup} that the cup diagram for $\lambda = ((4,2), (2^2, 1^{2}))$ is 

\medskip
\begin{center}
  \begin{tikzpicture}
 \draw (-6,0) -- (6,0);
\foreach \x in {} 
     \draw (\x-.1, .2) -- (\x,0) -- (\x +.1, .2);
\foreach \x in {} 
     \draw (\x-.1, -.2) -- (\x,0) -- (\x +.1, -.2);
\foreach \x in {} 
     \draw (\x-.1, .1) -- (\x +.1, -.1) (\x-.1, -.1) -- (\x +.1, .1);

\draw [-,black,out=270,in=270](3,0) to (4,0);
\draw [-,black,out=270,in=270](0,0) to (1,0);

\end{tikzpicture}
\end{center}
\medskip

Then there are 4 partitions linked to $\lambda$: $\lambda$ itself, 2 partitions obtained from $\lambda$ by interchanging the labels in one of the cups and a fourth summand by interchanging the labels in the two cups simultaneously. 
\end{example}

Let $t$ be an indeterminate and $R_{\delta}$ respective $R_{\delta,t}$ the Grothendieck rings of $\underline{Rep}(GL_{\delta})$ over $k$ respective of $\underline{Rep}(GL_t))$ over the fraction field $k((t-\delta))$. We follow the notation of \cite{Comes-Wilson} and denote by $(\lambda)$ or simply $\lambda$ the element $R(\lambda)$ in $R_{\delta,t}$ or $R_{\delta}$. Now define $\lift_{\delta}:R_{\delta} \to R_{\delta,t}$ as the $\Z$-linear map defined by $\lift_{\delta}(\lambda) = \sum_{\mu} D_{\lambda,\mu} \mu$ where the sum runs over all bipartitions $\mu$. By \cite[Theorem 6.2.3]{Comes-Wilson} $\lift_{\delta}$ is a ring isomorphism for every $\delta \in k$. 


\subsubsection{Generic tensor product decomposition} \label{sec:generic} By \cite[Theorem 7.1.1]{Comes-Wilson} the following decomposition holds for arbitrary bipartitions in $R_{\delta,t}$: \[ \lambda \mu = \sum_{v \in P \times P} \Gamma_{\lambda \mu}^{\nu} \nu\] with the numbers \[ \Gamma_{\lambda \mu}^{\nu} = \sum_{\alpha,\beta,\eta,\theta \in P} (\sum_{\kappa \in P} c_{\kappa \alpha}^{\lambda^L} c_{\kappa \beta}^{\mu^R}) \ (  \sum_{\gamma \in P} c_{\gamma \eta}^{\lambda^R} c_{\gamma \theta}^{\mu^L}) \ c_{\alpha \theta}^{\nu^L} c_{\beta \eta}^{\nu^R}, \] see \cite[Theorem 5.1.2]{Comes-Wilson}. Here $c_{\lambda \mu}^{\nu}$ denotes the Littlewood-Richardson coefficient and $P$ the set of all partitions. In particular if $\lambda \vdash (r,s)$, $\mu \vdash (r',s')$, then $\Gamma_{\lambda\mu}^{\nu} = 0$ unless $|\nu| \leq (r+r',s+s')$. So to decompose the tensor product $R(\lambda) \otimes R(\mu)$ in $\underline{Rep}(GL_{\delta})$ apply the following three steps: 

\medskip
\begin{enumerate}
\item Determine $\lift_{\delta}(\lambda \mu)$ in $R_{\delta,t}$, 
\item use the formula for $\Gamma_{\lambda \mu}^{\nu}$ above to compute the decomposition in $R_{\delta,t}$
\item and then take $\lift_{\delta}^{-1}$. 
\end{enumerate}

\subsection{Computations in $R_t$} \label{sec:comp} We continue to use our notation for the maximal atypical case and write $(i)$ instead of $(i;1^i)$. Clearly $\lift(i) = (i) + (i-1)$, $\lift(1^i) = (1^i) + (1^{i-1})$. We compute $R(i) \otimes R(j)$ in $\underline{Rep}(GL_0)$ following the three steps above. Hence in order to compute the tensor product $R(i) \otimes R(j)$ we have to compute the tensor product $(i) \otimes (j) + (i) \otimes (j-1) + (i-1) \otimes (j) + (i-1) \otimes (j-1)$ in $R_{\delta,t}$. We derive first a closed formula for $(i) \otimes (j)$ in $R_t$, i.e. for $((i,0,\ldots), (1^i)) \otimes (j,0,\ldots), (1^ {j})$. 

\begin{itemize}
 \item We analyze the sum $\sum_{\gamma \in P} c_{\gamma, \theta}^{\lambda^R} c_{\gamma,\eta}^{\mu^L}$. Here $\lambda^R= (1^i)$  and $\mu^L = (j,0,\ldots)$. We need to find all pairs of partitions $(a,b)$ such that $c_{a,b}^{\mu^L}$ is non-zero. We denote this by $(\mu^L)^{-1}$. Now the Pieri rule gives $(\mu^L)^{-1} = (0,j), (1,j-1),\ldots, (j-1,1), (j,0)$ and $(\lambda^R)^{-1} = (0, 1^i), (1,1^{i-1}),\ldots, (1^i,0)$. 
 Hence $c_{\alpha, \theta}^{\lambda^R} c_{\beta,\eta}^{\mu^L}$ is zero unless $(\gamma,\theta)$ and $(\gamma, \eta)$ are of the form $(0,i)$ and $(0, 1^{j})$ or are of the form $(1,i-1)$ and $(1,1^{j-1})$.
 

\item The contribution $\sum_{\kappa \in P} c_{\kappa,\alpha}^{\lambda^L} c_{\kappa,\beta}^{\mu^R}$: Here $\mu^R = (1^{j}), \ \lambda^L = (i)$. Similarly to the previous case this gives only the possibilities $c_{0,i}^{i} c_{0,1^{j}}^{1^{j}}$ and $c_{1,i-1}^{i} c_{1,1^{j-1}}^{1^{j}}$.
\end{itemize}


Hence the sum \[ \sum_{\alpha, \beta,\eta,\theta} ( \sum_{\kappa \in P} c_{\kappa,\alpha}^{\lambda^L} c_{\kappa,\beta}^{\mu^R})(\sum_{\gamma \in P} c_{\gamma, \eta}^{\lambda^R} c_{\gamma,\theta}^{\mu^L}) \] collapses to \[ (c_{0,i}^{i} c_{0,1^{j}}^{1^{j}} \ + \ c_{1,i-1}^{i} c_{1,1^{j-1}}^{1^{j}}) \ (c_{0,1^i}^{1^i} c_{0,j}^{j} + c_{1,1^{i-1}}^{1^i} c_{1,j-1}^{j}).  \] This corresponds to the choices 
\begin{itemize}
 \item (A) $\alpha = i, \ \beta= 1^{j}$
 \item (B) $\alpha = i-1, \ \beta =1^{j-1}$
 \item (C) $\eta =  1^i, \ \theta = j$
 \item (D) $\eta = 1^{i-1}, \ \theta = j-1$.
\end{itemize}

Only for these choices $AC, \ AD, \ BC, \ BD$ can there be a summand $(\nu)$ with nonvanishing $\Gamma_{\lambda\mu}^{\nu} = c_{\alpha,\theta}^{\nu^L} c_{\beta,\eta}^{\nu^R}$.

{\bf Notation:} From now on we only consider bipartitions $\nu$ with $\nu^L = (\nu^R)^{*}$ and think of such a bipartition as a partition $\nu^L$. Only these bipartitions will give maximal atypical summands in $\mathcal{R}_n$. The other summands can be easily calculated  later in section \ref{sec:gl(2|2)-lower-atypicality}.

\medskip
\begin{itemize}
 \item The AC-case: $c_{i,j}^{\nu^L} c_{1^{j},1^i}^{\nu^R} (\nu^L, \nu^R)$. By the Pieri rule $\nu^L$  can be any of $(i+j), \ (i+j -1,1), \ (i+j -2,2), \ldots$ and $\nu^R$ any of $(1^{i+ j})$,  $(2,1^{i+j-2}$, $\ldots$, $(i, |i-j|)$. Hence the following partitions $\nu$ (i.e. bipartitions of the form $(\nu^L; (\nu^L)^*$) appear with multiplicity 1: \[ (i+j), (i+j-1,1), \ldots, ((max(i,j),min(i,j)).\]

\item The AD-case: $c_{i,j-1}^{\nu^L} c_{1^{j},1^{i-1}}^{\nu^R}$. Restricting to $\nu^L = (\nu^R)^{*}$ we obtain \[ \nu \in \{ (i+j-1), (i+j-2,1), \ldots, ((max(i,j), min(i,j)-1)) \}.\] 

\item The BC-case: $c_{i-1,j}^{\nu^L} c_{1^{j-1}, 1^i}^{\nu^R}$. Here $\nu$ is any of \[ \nu \in \{ ((i+j-1), (i+j-2,1), \ldots, ((max(i,j), min(i,j)-1)) \} .\] 

\item The BD-case: $c_{i-1,j-1}^{\nu^L} c_{1^{j-1},1^{i-1}}^{\nu^R}$. Here \[ \nu \in \{ ((i+j-2), (i+j-3,1), \ldots, (max(i-1,j-1),min(i-1,j-1)).  \} \]

\end{itemize}

Hence in the Grothendieck ring $R_{\delta,t}$ \begin{align*}  & (i) \otimes (j)  = \\ & (i+j) + (i+j-1,1) + \ldots +  ((max(i,j),min(i,j)) \\  + & (i+j-1) +  (i+j-2,1) + \ldots + ((max(i,j), min(i,j)-1)) \\ + & (i+j-1) +  (i+j-2,1) + \ldots +  ((max(i,j), min(i,j)-1)) \\  + & ((i+j-2) +  (i+j-3,1) + \ldots +  (max(i-1,j-1),min(i-1,j-1)). \end{align*}



\subsection{Going back to $\underline{Rep}(GL_0)$} We calculate now the inverse $\lift^{-1}$ to get the decomposition in $\underline{Rep}(GL_0)$. In the special case $j = 1, i>1$ we get $(j-1) =0$ and hence $\lift((i) \otimes (1) ) = (i) \otimes (1) + (i) + (i-1) + (i-1) \otimes (1)$. In $R_t$ we have  \[ (i) \otimes (1) = (i+1) + (i,1) + 2(i) + (i-1)\] so that \[ \lift((i) \otimes (1)) = (i+1) + (i,1) + 4(i) + (i-1,1) + 4(i-1) + (i-2).\] After removing the contributions which will lead to $R(i+1) \oplus 2 R(i) \oplus R(i-1)$ we are left with $(i,1) + (i) + (i-1,1) + (i-1)$. This is the lift of $(i,1)$ and hence the indecomposable module $R(i,1)$ appears as a direct summand. 

\begin{lem} In $\underline{Rep}(GL_0)$ we have for $i \geq 2$
\[ R(i) \otimes R(1) = R(i+1) \oplus 2 R(i) \oplus R(i-1) \oplus R(i,1).\]
\end{lem}

In the general case we add up the contributions $((i) + (i-1)) \cdot ((j) + (j-1)) = (i)(j) + (i)(j-1) + (i-1)(j) + (i-1)(j-1)$. All the summands are of the following types $(a,0)$, $(a,b)$, $a> b>0$ or $(a,a),a>0$. We have \begin{align*} \lift(a,b) = & (a,b) + (a,b-1) + (a-1,b) + (a-1,b-1), \ \ \ a>b>0 \\ \lift(a,a) = & (a,a) + (a,a-1) + (a-1,a-2) + (a-2,a-2).\end{align*}

After removing the contributions in $R_{\delta,t}$ which will give the $R(i+j) \oplus 2 R(i+j-1) \oplus R(i+j-2)$ and applying successively the liftings from above we get the following decompositions. For $i >2, j=2$ we get \begin{align*} R(i) \otimes R(2) = & R(i+2) \oplus 2 R(i+2) \oplus R(i) \\ & \oplus R(i+1,1) \oplus R(i,2) \oplus 2 \cdot R(i,1) \oplus R(i-1,1) \end{align*} Assume now $i>2, \ j \geq 2$ and $i> j$. Then \begin{align*} R(i) \otimes R(j) = & R(i+j) \oplus 2 R(i+j-1) \oplus R(i+j-2) \\ & \oplus R(i+j-1,1) \\ & \oplus R(i+j-2,2) \oplus 2 \cdot R(i+j-2,1) \\ &  \oplus R(i+j-3,3) \oplus 2 \cdot R(i+j-3,2) \oplus R(i+j-3,1) \\ & \oplus R(i+j-4,4) \oplus 2 \cdot R(i+j-4,3) \oplus R(i+j-4,2) \\  & \oplus R(i+j-5,5) \oplus \ldots \\ & \oplus R(i, j) \oplus 2 \cdot R(i, j-1) \oplus R(i, j-2) \\ & \oplus R(i- 1, j-1). \end{align*} Now assume $i=j$. For $i=j=2$ we get \begin{align*} R(2) \otimes R(2) = & R(4) \oplus 2 R(3) \oplus R(2) \\ & \oplus R(3,1) \oplus R(2,2) \oplus 2 \cdot R(2,1).\end{align*} For $i = j > 2$ we get the same result as for $i \neq j$ while omitting the last factor $\oplus R(i-1, j-1)$.

\begin{remark} In the same way one can compute a closed formula of the tensor product $R(i) \otimes R(1^j)$. This is not needed for the $GL(2|2)$ calculations.
\end{remark}

\section{$GL(2|2)$ tensor products - the maximal atypical part}\label{sec:gl(2|2)}


We compute the decomposition of the tensor product of any two maximal atypical irreducible modules in ${\calR}_2$. In this section we compute only the direct summands which are maximal atypical. The remaining summands are computed in section \ref{sec:gl(2|2)-lower-atypicality}. The basic idea is to look at our formulas for $\A_{S^i} \otimes \A_{S^j}$ in the Grothendieck group and use these to compute the composition factors of $S^i \otimes S^j$ recursively starting with the obvious tensor product $S^i \otimes S^0$. We then determine the decomposition into indecomposable summands using results on cohomological tensor functors \cite{Heidersdorf-Weissauer-tensor} and case-by-case distinctions.

\subsection{The ${\calR}_2$-case: Setup} Recall from section \ref{sec:rep} the Berezin determinant $B=Ber = L(1,\ldots,1 \ | \ -1, \ldots, -1)$. Its tensor powers $Ber^i = L(i,\ldots,i \ | \ -i,\ldots,-i)$ are referred to as Berezin twists. In general \[ Ber^i \otimes L(\lambda_1,\ldots,\lambda_n \  | \ \lambda_{n+1}, \ldots, \lambda_{2n}) = L(\lambda_1 + i,\ldots,\lambda_n + i\  | \ \lambda_{n+1} - i, \ldots, \lambda_{2n} - i).\] Every maximally atypical irreducible representation $L(\lambda) = [\lambda_1,\lambda_2]$ (in the notation of section \ref{2}) is a Berezin twist of a representation of the form $S^i:= [i,0]$ for $i \in \N$. Since tensoring with $Ber$ is a flat functor, it is therefore enough to decompose the tensor product $S^i \otimes S^j$. The Ext-quiver of the maximal atypical block $\Gamma$ of ${\calR}_2$ can be easily determined from \cite{Brundan-Stroppel-2}. It has been worked out by \cite{Drouot}. For all irreducible modules in $\Gamma$ we have  $dim Ext^1(L(\lambda), L(\mu)) = dim Ext^1(L(\mu),L(\lambda)) = 0 \text{ or } 1$. The Ext-quiver can be picturised as follows where a line segment between two irreducible modules denotes a non-trivial extension class between these two modules and where an irreducible module $[x,y]$ is represented as a point in $\Z^2$. \[ \xymatrix@C=1em@R=1em{ & & & & & \ldots & \\ & & & & B^{j+3}  \ar@{-}[r] \ar@{-}[urr] & B^{j+3} S^1 \ar@{-}[u] \ar@{-}[r] & \ldots \\ & & & B^{j+2}  \ar@{-}[r] \ar@{-}[urr] & B^{j+2} S^1 \ar@{-}[u] \ar@{-}[r] & B^{j+2} S^2 \ar@{-}[u] \ar@{-}[r] & \ldots \\  & & B^{j+1}  \ar@{-}[r] \ar@{-}[urr] & B^{j+1} S^1 \ar@{-}[u] \ar@{-}
[r] & B^{j+1} S^2 \ar@{-}[u] \ar@{-}[r] & B^{
j+1} S^3 \ar@{-}[u] \ar@{-}[r] & \ldots \\ & B^j  \ar@{-}[r] \ar@{-}[urr] & B^j S^1 \ar@{-}[u] \ar@{-}[r] & B^j S^2 \ar@{-}[u] \ar@{-}[r] & B^j S^3 \ar@{-}[u] \ar@{-}[r] & B^j S^4 \ar@{-}[u] \ar@{-}[r] & \ldots \\ \ldots \ar@{-}[urr] \ar@{-}[r] & \ldots \ar@{-}[r] \ar@{-}[u] & \ldots \ar@{-}[r] \ar@{-}[u] & \ldots \ar@{-}[r] \ar@{-}[u]  & \ldots \ar@{-}[r] \ar@{-}[u]  & \ldots \ar@{-}[r] \ar@{-}[u]  & \ldots  } \]

The Loewy structure of the projective covers of a maximally atypical irreducible module can also be computed from \cite{Brundan-Stroppel-4} or be taken from Drouot: For $[a,b], a = b+k, k \geq 3$ the Loewy structure (we display the socle layers) is \[ P[a,b] = \begin{pmatrix} B^{a-k}S^k \\ B^{a-k}S^{k+1} \quad B^{a-k}S^{k-1} \quad B^{a-k-1} S^{k+1} \quad B^{a-k+1} S^{k-1} \\ 2 B^{a-k}S^k \quad B^{a-k-1}S^{k+2} \quad  B^{a-k-1}S^k \quad  B^{a-k+2} S^{k-3}  \\ B^{a-k}S^{k+1} \quad B^{a-k}S^{k-1} \quad B^{a-k-1} S^{k+1} \quad B^{a-k+1} S^{k-1} \\ B^{a-k}S^k \end{pmatrix}.\] For $[a,b], a = b+2$ the Loewy structure is \[ P[a,b] =  \begin{pmatrix} B^{a-2}S^2 \\ B^{a-2}S^3 \quad B^{a-2}S^1 \quad B^{a-3}S^3 \quad B^{a-1}S^1 \\ 2 B^{a-2}S^2 \quad B^{a-3}S^4 \quad B^{a-3}S^2 \quad B^{a-1}S^2 \quad B^{a-1} \quad B^{a-2} \\ B^{a-2}S^3 \quad B^{a-2}S^1 \quad B^{a-3}S^3 \quad B^{a-1}S^1  \\ B^{a-2}S^2 \end{pmatrix}.\] For $[a,b], a =b + 1$ the Loewy structure is \[ P[a,b] = \begin{pmatrix} B^{a-1}S^1 \\ B^{a-1}S^2 \quad B^{a-1} \quad B^{a-2} S^2 \quad B^a \quad B^{a-2} \\ 2 B^{a-1}S ^1 \quad B^{a-2}S^3 \quad B^{a-2}S^1 \quad B^a S^1  \\ B^{a-1}S^2 \quad B^{a-1} \quad B^{a-2} S^2 \quad B^a \quad B^{a-2} \\ B^{a-1}S^1 \end{pmatrix}.\] For $[a,b], a = b$ the Loewy structure is \[ P[a,b] = \begin{pmatrix} B^a \\ B^a S^1 \quad B^{a-1}S^1 \quad B^{a+1}S^1 \\ 2 B^a \quad  B^{a-1} \quad  B^{a-2} \quad  B^{a-1}S^2 \quad  B^a S^2 \quad  B^{a+1} \quad  B^{a+2} \\ B^a S^1 \quad  B^{a-1}S^1 \quad B^{a+1}S^1 \\ B^a \end{pmatrix}. \]



\subsection{The ${\calR}_2$-case: Mixed tensors}


All direct summands in the decomposition $R(i) \otimes R(j)$ in $\underline{Rep}GL_0$ satisfy $k(\lambda) \leq 2$. Hence they are not in the kernel of $F_{n|n}: \underline{Rep}(GL_0) \to \mathcal{R}_n$ for any $n \geq 2$. Therefore the formulas in the last section give us the maximal atypical summands in the decomposition of $\A_{S^i} \otimes \A_{S^j}$ for any $n \geq 2$. We specialise this decomposition to the ${\calR}_2$-case. All formulas hold only after projection to $\Gamma$.  It is easy to see that the $R(a,b)$ ($b > 0$) satisfy $k(\lambda) = 2$ and hence are projective covers of irreducible maximal atypical representations. The top and socle of these covers can be easily computed using the map $\theta: \Lambda \to X^+$ (see section \ref{sec:theta}). For small $j$ we get \begin{align*} \A_{S^1} \otimes \A_{S^1} = & \A_{S^2} \oplus 2 \cdot \A_{S^1} \oplus \A_{S^2}^{\vee} \\  \A_{S^i} \otimes \A_{S^1} = & \A_{S^{i+1}} \oplus 2 \cdot \A_{S^{i}} \oplus \A_{S^{i-1}} \oplus P[i-1,0]. \\
\A_{S^i} \otimes \A_{S^2} = & \A_{S^{i+2}} \oplus 2 \cdot \A_{S^{i+1}} \oplus \A_{S^{i}} \\ & \oplus P([i,0]) \oplus P([i-1,1] \oplus 2 \cdot P([i-1,0]) \oplus P([i-2,0]) \end{align*} where we assumed $i>1$ respectively $i>2$. Assume now $i>2, \ j \geq 2$ and $i> j$. \begin{align} \label{eq:A-TP}  \A_{S^i} \otimes \A_{S^j} = & \A_{S^{i+j}} \oplus 2 \cdot \A_{S^{i+j-1}} \oplus \A_{S^{i+j-2}} \nonumber \\ & \oplus P[i+j-2,0]) \\ & \oplus P[i+j-3,1] \oplus 2 \cdot P[i+j-3,0] \nonumber \\ &  \oplus P[i+j-4,2] \oplus 2 \cdot P[i+j-4,1] \oplus P[i+j-4,0] \nonumber \\ & \oplus P[i+j-5,3] \oplus 2 \cdot P[i+j-5,2] \oplus P[i+j-5,1] \nonumber \\  & \oplus P[i+j-6,4] \oplus \ldots \nonumber \\ & \oplus P[i-1, j-1] \oplus 2 \cdot P[i-1, j-2] \oplus P[i-1, j-3] \nonumber \\ & \oplus P[i - 2, j-2]. \nonumber \end{align} For $i=j=2$ \[ \A_{S^2} \otimes \A_{S^2} = \A_{S^4} \oplus 2 \A_{S^3} \oplus \A_{S^2} \oplus P[2,0] \oplus P[0,0] \oplus 2 P[1,0].\] For $i=j>2$ we have the same result without the last summand $P[i-2,j-2]$.



\subsection{The ${\calR}_2$-case: $K_0$-decomposition}

The tensor product decomposition of the $\A_{S^i} \otimes \A_{S^j}$ along with the knowledge of the composition factors of the indecomposable summands permits to give recursive formulas for the $K_0$-decomposition of the tensor products $S^i \otimes S^j$ in the Grothendieck ring $K_0 = K_0(\calR_n)$. Due to the asymmetry of the formulas and the asymmetry of the $K_0$-decompositions for $\A_{S^i}$ and $P[a,b]$ for small $i$ and $a-b$ we compute the tensor products for small $i$ and $j$ first. The $K_0$-decomposition $S^1 \otimes S^1$ follows immediately from the $\A_{S^1} \otimes \A_{S^1}$-decomposition and we get \[ S^1 \otimes S^1 = 2 {\bf 1} + 2 S^1 + B + B^{-1} + B^{-1} S^2 + S^2.\] Similarly one computes \begin{align*} S^2 \otimes S^1 & = 2 S^2 + S^3 + B^{-1}S^3 + S^1 + B S^1 \\ S^2 \otimes S^2 & = S^4 + B^{-1} S^4 +  2 S^3 + S^2 + BS^2 + 2 B S^1 + {\bf 1} + 2B + B^2.\end{align*} 

\begin{lem} We have $P[i,0] = 2 \A_{S^{i+1}} + B^{-1} \A_{S^{i+2}} + B \A_{S^i}$ for $i \geq 1$ in $K_0(\calR_2)$.
\end{lem}

\begin{proof} This is just a direct inspection of the Loewy structures above.
\end{proof}
                                     
\begin{lem}\label{composition-factors} For all $i > j$ we have in the Grothendieck group $K_0(\calR_2)$ \begin{align*} S^i & \otimes S^j =  2(S^{i+j-1} + Ber S^{i+j-3} + \cdots + Ber^{j-1} S^{i-j+1}) \\ & + S^{i+j}(1+Ber^{-1}) + S^{i+j-2}(1 + Ber^{-1}) + \cdots + Ber^j S^{i-j} (1+Ber^{-1}) \ .\end{align*} For $i=j$ we get \begin{align*} S^i \otimes S^i = & 2(S^{2i-1} + Ber S^{2i-3} + \cdots + Ber^{i-1} S^{1}) \\ &
+ S^{2i}(1+Ber^{-1}) + \cdots + Ber^i (1+Ber^{-1}) + B^{i-1} + B^{i-2} \ .\end{align*}
 
\end{lem}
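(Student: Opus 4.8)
The plan is to derive the $K_0$-formula for $S^i \otimes S^j$ by descending induction from the closed formulas for $\A_{S^i} \otimes \A_{S^j}$ established in section~\ref{sec:symmetric-times-symmetric}. The key identity in the Grothendieck group is $[\A_{S^i}] = [S^i] + 2[S^{i-1}] + [S^{i-2}]$ (for $2 < i \neq n$; with obvious modifications at the boundary), which allows one to \emph{solve} for $[S^i \otimes S^j]$: since $\A_{S^i}$ has $S^i$ as its unique highest-weight constituent, when one expands $[\A_{S^i} \otimes \A_{S^j}]$ using this identity, the term $[S^i \otimes S^j]$ appears exactly once, and every other tensor product occurring is of the form $[S^k \otimes S^l]$ with $k \leq i$, $l \leq j$ and $k+l < i+j$. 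Thus the formula for $[\A_{S^i} \otimes \A_{S^j}]$ (with the $\A_{S^{\ldots}}$ and $P[a,b]$ summands on the right) rearranges into a recursion expressing $[S^i \otimes S^j]$ in terms of strictly smaller tensor products plus known terms.

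First I would fix base cases: the products $S^i \otimes S^0 = S^i$ are trivial, and the small cases $S^1 \otimes S^1$, $S^2 \otimes S^1$, $S^2 \otimes S^2$ are already computed in the excerpt directly from $\A_{S^1}\otimes\A_{S^1}$ etc. Next I would feed the $\calR_2$-specialised decompositions of $\A_{S^i}\otimes\A_{S^j}$ (with the projective summands $P[a,b]$) into the recursion. The composition factors of the $P[a,b]$ in the maximal atypical block of $\calR_2$ are given by the Loewy diagrams of section~\ref{sec:gl(2|2)}, equivalently by the preceding lemma $[P[i,0]] = 2[\A_{S^{i+1}}] + B^{-1}[\A_{S^{i+2}}] + B[\A_{S^i}]$ and its Berezin twists; so every term on the right-hand side of the recursion, except the single unknown $[S^i\otimes S^j]$, is expressible via the $[\A_{S^{\ldots}}]$, hence ultimately via known smaller $[S^k \otimes S^l]$ by the induction hypothesis. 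One then solves for $[S^i \otimes S^j]$ and checks that the result collapses to the claimed closed form. Concretely, induct on $i+j$ (and for fixed $i+j$, on $|i-j|$, or treat $i>j$ and $i=j$ in parallel), using the $i>j$ formula and the $i=j$ formula for $\A_{S^i}\otimes\A_{S^j}$ respectively.

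The bookkeeping is where the real work lies: one must verify that the alternating telescoping of the many $2\cdot P[\cdots]$ and $P[\cdots]$ summands, after substituting $[P[a,b]]$ in terms of the $\A$'s and then the $\A$'s in terms of the $S$'s, plus the inductive expansion of the $Ber^{\ldots}(S^k\otimes S^l)$ coming from the $\A_{S^{i+j-r}}$ terms, produces exactly the stated sum $2(S^{i+j-1} + Ber\,S^{i+j-3} + \cdots + Ber^{j-1}S^{i-j+1}) + (1+Ber^{-1})(S^{i+j} + \cdots + Ber^j S^{i-j})$, with the extra correction $B^{i-1}+B^{i-2}$ appearing only in the $i=j$ case (traceable to the extra summands $B^{a-1}, B^{a-2}$, $B^{a+1}, B^{a+2}$ in the projective covers $P[a,a]$ and $P[a-1,a-2]$ that have no counterpart for $i>j$). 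I expect the main obstacle to be controlling the boundary behaviour of the recursion: the formulas for $\A_{S^i}\otimes\A_{S^j}$ and for the Loewy layers of $P[a,b]$ differ when $a-b \in \{0,1,2\}$ versus $a-b \geq 3$, and near $i=j$ the "small" $P[a,b]$ with $a-b\le 2$ intervene, so the closed form must be checked to be stable under these case distinctions rather than emerging uniformly. A clean way to handle this is to prove the formula first for $i>j$ by induction (where only $P[a,b]$ with $a-b$ ranging through all values occur but the telescoping is symmetric), and then obtain the $i=j$ case by specialising and carefully re-collecting the residual Berezin powers.
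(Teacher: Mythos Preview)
Your proposal is correct and follows essentially the same approach as the paper: expand $[\A_{S^i}\otimes\A_{S^j}]$ using $[\A_{S^k}] = [S^k]+2[S^{k-1}]+[S^{k-2}]$ to isolate the single unknown $[S^i\otimes S^j]$, substitute the lemma $[P[a,0]] = 2[\A_{S^{a+1}}]+B^{-1}[\A_{S^{a+2}}]+B[\A_{S^a}]$ for the projective summands, and finish by comparing $B$-powers. The only organizational difference is that the paper handles the boundary behaviour by treating the families $S^i\otimes S^1$ and $S^i\otimes S^2$ separately first (rather than as edge cases of a uniform induction on $i+j$), and then runs the generic recursion for $i>j>2$ and $i=j>2$; your anticipated difficulties with $a-b\le 2$ are exactly what this separation absorbs.
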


\begin{proof}We first consider the cases $S^i \otimes S^1$ and $S^i \otimes S^2$ for $i>1$ respectively $i >2$.
The case $S^i \otimes S^1$, $i> 1$: For the induction start $i=2$ see above. Put $C_i = S^i \otimes S^1$ in $K_0({\calR}_n)$. For $i\geq 4$ we get then the uniform formula $S^i\otimes S^1 + 2 S^{i-1}\otimes S^1 + S^{i-2}\otimes S^1 =
(S^{i+1} + 2S^i + S^{i-1}) + (S^{i-1} + 2S^{i-2} + S^{i-3}) + (2C_{i-1} + Ber^{-1} S^{i+1}
+ Ber^{-1} S^{i-1} + Ber S^{i-1} + Ber S^{i-3})$. Hence using the induction assumption 
$S^{i-2}\otimes S^1 = 2S^{i-2} + S^{i-1} + Ber^{-1} S^{i-1} + S^{i-3} + Ber S^{i-3}$ we get
$S^i \otimes S^1 = 2S^i + S^{i+1} + S^{i-1} + Ber^{-1} S^{i+1} + Ber S^{i-1}$, and this proves the
induction step. Likewise for $S^i \otimes S^2$. Now assume $i>j>2$. Then for $\A_{S^i} \otimes \A_{S^j}$ we get using lemma \ref{lem:const} the regular formula in $K_0({\calR}_2)$ \begin{align*} \A_{S^i} \otimes \A_{S^j} = & S^i \otimes S^j + 4 (S^{i-1} \otimes S^{j-1}) + 2 (S^{i-1} \otimes S^j) + \\ & 2 (S^{i-1} \otimes S^{j-2}) + 2 (S^i \otimes S^{j-1}) + S^i \otimes S^{j-2} +  \\ & 2 ( S^{i-2} \otimes S^{j-1}) + S^{i-2} \otimes S^j + S^{i-2} \otimes S^{j-2}.\end{align*} All tensor products except $S^i \otimes S^j$ are known by induction. On the other hand this sum equals $\A_{S^{i+j}} + 2 \A_{S^{i+j}} + \A_{S^{i+j-2}} + P[i+j-2,0] + 2 P[i+j-3,0] + P[i+j-4,0] (1+ B)  +  2 B P[i+j-5,0] + B P[i+j-6,0] (1+B) + \ldots + 2 B^{j-2} P[i-j+1,0]  + B^{j-2} P[i-j,0] ( 1+ B)$.  Plugging in $P[a,0] = 2 \A_{S^{a+1}} + B^{-1} \A_{S^{a+2}} + \A_{S^a}$ for all $a \geq 1$ and comparing terms with the same $B$-power on both sides finishes the proof. The case $i=j$ works exactly the same way. \end{proof}


\subsection{The ${\calR}_2$-case: Socle Estimates}

We say $w(M)=k$ for a module $M$ if $M^\vee \cong Ber^{-k} M$.
Examples: $w(S^i)= i-1$ and $w(Ber) = 2$, and therefore
$$ w(S^i \otimes S^j) \ =\ i+j -2 \ .$$
On the other hand for $*$-selfdual modules $M$ we have 
$$ soc(M) \cong cosoc(M) \ ,$$ 
since $*$-duality is trivial on semisimple modules. On the other hand
$w(M)=k$ implies $soc(M)^\vee \cong Ber^{-k} cosoc(M)$, so that both
conditions together imply $w(soc(M))=k$. Hence being semi-simple, it is a direct sum of modules 
\[  soc(M) \cong soc'(M) \oplus \bigoplus_{\nu\in \bf Z} m(\nu) \cdot Ber^\nu S^{k+1-2\nu} \]
with $S^i = 0$ for $i <0$ and certain multiplicities $m(\nu)$, plus a sum $soc'(M)$ of modules of type
$$  \bigr( Ber^\nu \oplus Ber^{k-\nu-j+1}\bigl) S^j  $$
for certain $\nu\in \bf Z$ and certain natural numbers $j$ with $k-\nu - j+1 \neq \nu$. 
\bigskip

\begin{prop} For $n \geq 2$ and for $i> j\geq 2$ we have $soc'(M) =0$ for $M=S^{i-1} \otimes S^{j-1}$ and in $K_0(\calR_n)$ \[  soc(S^{i-1}\otimes S^{j-1}) \hookrightarrow 3 \cdot S^{i+j-3} + 2 \cdot Ber S^{i+j-5} + \cdots + 
2 \cdot Ber^{j-2} S^{i-j +1}  \ .\] For $i=j \geq 2$ we have in $K_0(\calR_n)$ \[  soc(S^{i-1}\otimes S^{i-1}) \hookrightarrow 3 \cdot S^{2i-3} + 2 \cdot Ber S^{2i-5} + \cdots +  
2 \cdot Ber^{i-2} S^{ 1} + B^{i-4}  \ .\]
\end{prop}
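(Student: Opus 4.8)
The plan is to bootstrap from the two pieces of structural information already available: the $K_0$-decomposition of $S^{i-1}\otimes S^{j-1}$ from Lemma \ref{composition-factors}, and the general shape of the socle of a $*$-selfdual module derived just above the statement. First I would observe that $M=S^{i-1}\otimes S^{j-1}$ is $*$-selfdual (a tensor product of $*$-selfdual simples), and compute $w(M)=(i-1)+(j-1)-2=i+j-4$. Since $soc(M)\cong cosoc(M)$ and $w(soc(M))=w(M)$, the socle is a semisimple module all of whose summands $Ber^\nu S^j$ satisfy $w=i+j-4$, i.e. $\nu+(j-1)=i+j-4$, together with the ``mixed'' summands $soc'(M)$ of the form $(Ber^\nu\oplus Ber^{w-\nu-j+1})S^j$. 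So the candidate simple summands of the diagonal type are exactly $Ber^\nu S^{i+j-3-2\nu}$ for $\nu=0,1,\dots$, which are the modules listed on the right-hand side of the claimed embedding.

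Next I would bound the multiplicities. The key point is that $soc(M)$ embeds into $M$, so each simple $Ber^\nu S^{i+j-3-2\nu}$ can occur in $soc(M)$ with multiplicity at most its multiplicity as a composition factor of $M$, which is read off from Lemma \ref{composition-factors}. Inspecting that formula: the ``$2(S^{i+j-3}+Ber S^{i+j-5}+\cdots)$'' part contributes multiplicity $2$ to each $Ber^\nu S^{i+j-3-2\nu}$, and one also has to check whether the ``$S^{i+j-2}(1+Ber^{-1})+\cdots$'' part (written for $S^i\otimes S^j$, so here with $i,j$ replaced by $i-1,j-1$) contributes additional copies of these same simples. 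The term $Ber^{-1}S^{i+j-2}$ has the right weight and equals $Ber^{-1}S^{i+j-2}$, which is $Ber^\nu S^{i+j-3-2\nu}$ for $\nu=-1$; hence exactly one of the listed simples, namely $S^{i+j-3}$ (arising from the top term $S^{i+j-2}$ with its $Ber^{-1}$ partner shifting the index), picks up one extra unit, giving the asymmetric coefficient $3$ for the leading term $S^{i+j-3}$ and $2$ for all subsequent ones. For the diagonal case $i=j$ the extra $B^{i-1}+B^{i-2}$ terms in Lemma \ref{composition-factors} (again with $i$ replaced by $i-1$) account for the trailing $B^{i-4}$ summand in the claimed embedding; note $B^{i-3}+B^{i-4}$ are the two Berezin powers there, and only $B^{i-4}=Ber^{i-4}=Ber^{i-4}S^{1}$-type lines up — one must check carefully which of the two has the correct weight and whether it is the simple or enters $soc'$.

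The claim $soc'(M)=0$ is the part needing an actual argument rather than bookkeeping, and I expect it to be the main obstacle. The idea is that a nonzero $soc'$ summand $(Ber^\nu\oplus Ber^{k-\nu-j+1})S^j$ with $k-\nu-j+1\neq\nu$ would force \emph{two distinct} Berezin twists of the same $S^j$ into $soc(M)=cosoc(M)$; one would then exploit the recursion $\A_{S^{i-1}}\otimes\A_{S^{j-1}}$ versus $\A_{S^i}\otimes\A_{S^j}$ from the formulas in Section \ref{sec:gl(2|2)}, together with the fact that $\A_{S^{i-1}}\otimes\A_{S^{j-1}}$ is (up to projectives $P[a,b]$, which are $*$-selfdual and contribute symmetrically) a sum of the $\A_{S^\bullet}$'s whose socles are known — each $\A_{S^k}=P[k-1,0]$ has socle $S^{k-1}$, a single untwisted simple. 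Since $M=S^{i-1}\otimes S^{j-1}$ appears inside $\A_{S^{i-1}}\otimes\A_{S^{j-1}}$ with the other constituents being projectives and lower $\A_{S^\bullet}$'s, and projectives split off, the socle of $M$ is constrained to have only the ``balanced'' multiplicity pattern forced by $w$; any unbalanced $Ber^\nu$-part would survive to the socle of the whole tensor product and contradict the known structure. Making this last step rigorous — tracking exactly how $soc(M)$ sits inside $soc(\A_{S^{i-1}}\otimes\A_{S^{j-1}})$ and ruling out cancellation — is where the care is needed; I would handle it by induction on $i+j$ in parallel with Lemma \ref{composition-factors}, using the base cases $S^1\otimes S^1$, $S^2\otimes S^1$, $S^2\otimes S^2$ computed explicitly above.
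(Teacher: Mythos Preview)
Your approach via composition factors from Lemma~\ref{composition-factors} differs from the paper's, and part of it would work, but the argument for $soc'(M)=0$ has a genuine gap.

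First, a small correction: the coefficient $3$ on $S^{i+j-3}$ does \emph{not} come from composition factors of $M$. In fact, if you read off the multiplicities of the diagonal simples $Ber^\nu S^{i+j-3-2\nu}$ in Lemma~\ref{composition-factors} (applied to $S^{i-1}\otimes S^{j-1}$), you get $2$ for each, including the top one: none of the terms $Ber^r S^{i+j-2-2r}(1+Ber^{-1})$ has weight $i+j-4$. So your composition-factor bound is actually \emph{stronger} than the stated Proposition --- which is fine, but your paragraph explaining where the extra $1$ comes from is incorrect. (In the paper the $3$ arises from the ambient socle, not from $M$ itself.)

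The real problem is the $soc'(M)=0$ part. You propose to track $soc(M)$ inside $\A_{S^{i-1}}\otimes\A_{S^{j-1}}$, but $S^{i-1}$ is \emph{not} a submodule of $\A_{S^{i-1}}$: it sits in the middle Loewy layer, not the socle (the socle of $\A_{S^k}$ is $S^{k-1}$). So there is no embedding $M\hookrightarrow \A_{S^{i-1}}\otimes\A_{S^{j-1}}$ to exploit, and the sentence ``$M$ appears inside $\A_{S^{i-1}}\otimes\A_{S^{j-1}}$'' only holds on the level of composition factors, which says nothing about socles. (Also, $\A_{S^k}$ is not projective for $n\geq 2$, so $\A_{S^k}\neq P[k-1,0]$ in general.) Your composition-factor argument alone cannot rule out $soc'(M)$ either: the non-diagonal pairs $(Ber^r\oplus Ber^{r-1})S^{i+j-2-2r}$ \emph{do} occur among the composition factors of $M$ and are stable under the involution $L\mapsto Ber^{k}L^\vee$, so nothing in $K_0$ prevents them from lying in the socle.

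The paper's argument is shorter and avoids induction entirely. The missing idea is to use the \emph{correct} ambient module: since $S^{i-1}=soc(\A_{S^i})$ and $S^{j-1}=soc(\A_{S^j})$, one has a genuine embedding $M=S^{i-1}\otimes S^{j-1}\hookrightarrow \A_{S^i}\otimes\A_{S^j}$, hence $soc(M)\hookrightarrow soc(\A_{S^i}\otimes\A_{S^j})$. The right-hand side is computed directly from the decomposition formulas for $\A_{S^i}\otimes\A_{S^j}$ (each summand has known socle), and one then checks that for each non-diagonal simple $L$ occurring there, its involution-partner $Ber^{k}L^\vee$ (with $k=i+j-4$) does \emph{not} occur in that ambient socle. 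Since any $L\in soc'(M)$ would force both $L$ and its partner into the ambient socle, this kills $soc'(M)$ in one stroke --- no induction needed.
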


\begin{proof} Assume $i>j$. Note that $soc(M) \hookrightarrow soc(\A_{S^i} \otimes \A_{S^j})$ and by the formula \ref{eq:A-TP} from above
the latter is (using $soc(P[a,b]) = [a,b] = Ber^b S^{a-b}$) \begin{align*} S^{i+j-1} + & 3 S^{i+j-2} + 3 S^{i+j - 3} + (Ber + {\bf 1})S^{i+j-4} + 2 Ber S^{i+j-5} \\  + &  (Ber + {\bf 1})Ber S^{i+j-6} + 2 Ber^2 S^{i+j-7} + \cdots \\
+ &  (Ber + {\bf 1})Ber^{j-2} S^{i-j} + 2 Ber^{j-2} S^{i-j+1} \ .\end{align*}
Since $k=w(M) = (i-1) - 1 + (j-1) - 1 = i+j -4$, this implies the assertion $soc'(M)=0$. Indeed
the terms $  S^{i+j-1} +  3 S^{i+j-2}$ and also $N=(Ber + {\bf 1})Ber^\nu S^{i+j-4-2\nu}$ 
cannot contribute to $soc'(M)$, since \begin{align*}  N^\vee = & (Ber^{-1} + {\bf 1})Ber^{-\nu} Ber^{-i-j+3+2\nu} S^{i+j-4-2\nu} \\
= & (Ber^{-1} + {\bf 1})Ber^{-i-j+3+\nu} S^{i+j-4-2\nu}  \end{align*}
and \begin{align*} Ber^{-k}N = & Ber^{-k}(Ber + {\bf 1})Ber^\nu S^{i+j-4-2\nu} \\ = & (Ber^2 + Ber) Ber^{-i-j+3+\nu} S^{i+j-4-2\nu}\end{align*}
have no common irreducible summand. Hence 
$ soc(M)$ is contained in $ 3\cdot S^{i+j-3} + 2 \cdot Ber S^{i+j-5} + \cdots + 2 \cdot Ber^{j-2}S^{i-j+1}$. The proof is analogous for $i=j$. 
\end{proof}




\subsection{ The Duflo-Serganova functor $DS$}\label{DF}

We recall some constructions from the article \cite{Heidersdorf-Weissauer-tensor}.

\medskip {\it An embedding}. We view $G_{n-1}= GL(n-1|n-1)$ as an \lq{outer  block matrix}\rq\ in $G_n=GL(n|n)$ and $G_1$ as the \lq{inner  block matrix}\rq\  at the matrix positions $n\leq i,j \leq n+1$. Fix the following element $x\in \mathfrak{g}_1$, \[ x = \begin{pmatrix} 0 & y \\ 0 & 0 \end{pmatrix} \text{ for } \ y = \begin{pmatrix} 0 & 0 & \ldots & 0 \\ 0 & 0 & \ldots & 0 \\ \ldots & & \ldots &  \\ 1 & 0  & 0 & 0 \\ \end{pmatrix}. \] We furthermore fix the 
embedding \[ \varphi_{n,1}: G_{n-1} \times G_1 \hookrightarrow G_n \]
defined by $$\begin{pmatrix} A & B \\ C & D \end{pmatrix} \times \begin{pmatrix} a & b \\ c  & d\end{pmatrix} \mapsto \begin{pmatrix} A & 0 & 0 & B \\ 0 & a    & b    & 0\\ 0 & c   &  d     & 0\\ C & 0 & 0 & D \end{pmatrix}.$$ 
We use this embedding to identify elements in $G_{n-1}$ and $G_1$ with elements
in $G_n$.
In this sense $\epsilon_n = \epsilon_{n-1} \epsilon_1$ holds in $G_n$ (i.e. $\varphi_{n,1} (\epsilon_{n-1},\epsilon_1) = \epsilon_n$), for the corresponding elements $\epsilon_{n-1}$ and 
$\epsilon_1$ in $G_{n-1}$ resp. $G_1$, defined in section \ref{2}.

\medskip {\it Two functors}. 
One has a functor $(V,\rho) \mapsto V^+ =\{ v \in V\ \vert \ \rho(\epsilon_1)(v)=v \}$
$$ {}^+: {\calR}_n \to {\calR}_{n-1}$$ 
where $V^+$ is considered as a $G_{n-1}$-module using $\rho(\epsilon_1) \rho(g) = \rho(g) \rho(\epsilon_1)$
%
Similarly define $V^- =\{ v \in V\ \vert \ \rho(\epsilon_1)(v)=-v \}$. With the grading induced from $V=V_0 \oplus V_1$
this defines a representation $V^-$  of $G_{n-1}$  in $\Pi {\calR}_{n-1}$. Obviously
$$   (V,\rho)\vert_{G_{n-1}} \ =\ V^+  \ \oplus \ V^-  \ .$$  


\medskip {\it Cohomological tensor functors}.  Since $x$ is an odd element with $[x,x]=0$, we get $$2 \cdot \rho(x)^2 =[\rho(x),\rho(x)] =\rho([x,x]) =0 $$ for any representation
$(V,\rho)$ of $G_n$ in ${\calR}_n$. Notice $d= \rho(x)$ supercommutes with $\rho(G_{n-1})$. 
Furthermore $\rho(x): V^{\pm} \to V^{\mp}$ holds as a $k$-linear map, an immediate consequence of $d\rho(\varepsilon_{1}) = - \rho(\varepsilon_{1})d$, i.e. of $Ad(\varepsilon_1)(x)=-x$. 
Since $\rho(x)$ is an {\it odd} morphism,
$\rho(x)$ induces the following {\it even} morphisms (morphisms in ${\calR}_{n-1}$)
$$ \rho(x): V^+ \to \Pi(V^-) \quad {
and} \quad \rho(x): \Pi(V^-) \to V^+ \ .$$
The $k$-linear map $\partial=\rho(x): V\to V$ is a differential and commutes with the action of $G_{n-1}$ on $(V,\rho)$. Therefore $\partial$ defines a complex
in ${\calR}_{n-1}$ 
$$ \xymatrix{ \ar[r]^-{\partial} & V^+ \ar[r]^-{\partial} &  \Pi(V^-) \ar[r]^-{\partial} & V^+ \ar[r]^-{\partial} & \cdots } $$
Since this complex is periodic,  it has essentially only two cohomology groups denoted $H^+(V,\rho)$ and $H^-(V,\rho)$ in the following. This defines two functors $(V,\rho) \mapsto D_{n,n-1}^\pm(V,\rho)=H^{\pm}(V,\rho)$ 
$$  D_{n,n-1}^\pm: {\calR}_n \to {\calR}_{n-1}.$$ 


For the categories $\mathcal{T}=\mathcal{T}_n$ resp. $\mathcal{T}_{n-1}$ (for the groups
$G_n$ resp. $G_{n-1}$) consider the  
tensor functor of Duflo and Serganova in \cite{Duflo-Serganova} $$ DS_{n,n-1}: \mathcal{T}_n \to \mathcal{T}_{n-1} $$
defined by $DS_{n,n-1}(V,\rho)= V_x:=Kern(\rho(x))/Im(\rho(x))$. Then for $(V,\rho)\in \calR_n$  
$$   H^+(V,\rho) \oplus \Pi (H^-(V,\rho)) = DS_{n,n-1}(V) \ .$$
Indeed, the left side is $DS_{n,n-1}(V)=V_x$ for the $k$-linear 
map $\partial=\rho(x)$ on $V=V^+ \oplus V^-$. Hence $H^ +$  is the functor obtained by composing the tensor functor
$$ DS_{n,n-1}: {\calR}_n \to \mathcal{T}_{n-1} $$
with the functor 
$$ \mathcal{T}_{n-1} \to {\calR}_{n-1} $$
that projects the abelian category $\mathcal{T}_{n-1}$ onto ${\calR}_{n-1}$ using $\mathcal{T}_n = \calR_n \oplus \Pi \calR_n$.

%
%

\medskip
{\it The ring homomorphism $d$}. As an
element of the Grothendieck group $K_0(\calR_{n-1})$  we define for a module $M \in \calR_n$  $$d(M)= H^+(M) - H^-(M)\ .$$ The map
$d$ is additive by \cite{Heidersdorf-Weissauer-tensor}. 
Notice $$K_0(\mathcal{T}_n) = K_0(\calR_n) \oplus K_0(\calR_n[1]) = K_0(\calR_n) \otimes
(\mathbb Z \oplus \mathbb Z\cdot \Pi) \ .$$
We have a commutative diagram
$$  \xymatrix{  K_0(\mathcal{T}_n) \ar[d]_{DS} \ar[r] &    K_0(\calR_n) \ar[d]^d \cr
 K_0(\mathcal{T}_{n-1})  \ar[r] &    K_0(\calR_{n-1}) \cr} $$
 where the horizontal maps are surjective ring homomorphisms defined by
$\Pi \mapsto -1\ .$ 
Since $DS$ induces a ring homomorphism, $d$ defines
a ring homomorphism.



\subsection{The ${\calR}_2$-case: Indecomposability} If we display the maximal atypical composition factors $[x,y]$ of $S^i \otimes S^j$ in the $\Z^2$-lattice, we get the following picture. Here $\Box$ denotes composition factors occuring with multiplicity 2 and the $\circ$ appear with multiplicity 1. The socle is contained in the subset of composition factors denoted by $\Box$.


\[ \xymatrix@C=1em@R=1em{ & \circ \ar@{-}[d] & & & & & \\ \circ \ar@{-}[r] & \Box \ar@{-}[r] \ar@{-}[d] & \circ \ar@{-}[d] &  & & & \\ & \circ \ar@{-}[r]
& \Box \ar@{-}[d] \ar@{-}[r] & \circ \ar@{-}[d] & & &  \\  & & \ddots & \ddots & \ddots & \\   &
& & \circ \ar@{-}[r] \ar@{-}[u] & \Box \ar@{-}[r] \ar@{-}[d] \ar@{-}[u]  & \circ \\   &
& &  & \circ &     } \] with the two $\circ$ to the upper left at position $B^j S^{i-j}$ and $B^{j-1}S^{i-j}$ and the ones to the lower right at position $B^{-1}S^{i+j}$ and $S^{i+j}$. The picture in the $i=j$-case is similar

\[ \xymatrix@C=1em@R=1em{ & & \circ \ar@{-}[d] & & & &  \\ & \odot \ar@{-}[r] & \Box \ar@{-}[r] \ar@{-}[d] & \circ \ar@{-}[d] &  & &  \\ \circ \ar@{-}[urr] & & \circ \ar@{-}[r]
& \Box \ar@{-}[d] \ar@{-}[r] & \circ \ar@{-}[d] & &  \\ & & & \ldots & \ldots & }\] with the composition factor $\odot$ at position $B^{i-1}$ appearing with multiplicity 2 and the additional $\circ$ at position $B^{i-2}$.



\medskip

We now make use of the cohomological tensor functors $DS$. In the $GL(1|1)$-case $S^i \simeq B^i$ and hence $S^i \otimes S^j = S^{i+j}$. We know from \cite{Heidersdorf-Weissauer-tensor} that $DS(S^i) = S^i + \Pi^{1-i}B^{-1}$ and $DS(B) = \Pi^{-1}B$. Hence $DS(S^i \otimes S^j)$ splits into four indecomposable summands each of superdimension 1 or each of superdimension -1: \begin{align*} DS(S^i \otimes S^j) & = (S^i \oplus \Pi^{1-i} B^{-1}) \otimes (S^j \oplus \Pi^{1-j}B^{-1}) \\ & = B^{i+j} \oplus \Pi^{1-j}B^{i-1} \oplus \Pi^{1-i}B^{j-1} \oplus \Pi^{2-i-j}B^{-2}.\end{align*} Hence $M = S^i \otimes S^j$ splits into at most four indecomposable summands of $sdim \neq 0$.

\begin{lem} Every atypical direct summand is $^*$-invariant.
\end{lem}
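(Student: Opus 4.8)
The plan is to exploit the $*$-duality on $\calR_2$ together with the constraints we already have on the number of indecomposable summands of $M = S^i\otimes S^j$. First I would record the basic observations: $S^i$ and $S^j$ are irreducible, hence $*$-selfdual, and $*$-duality is a contravariant tensor functor, so $M^* \cong (S^i)^*\otimes (S^j)^* \cong S^i\otimes S^j = M$. Thus $M$ itself is $*$-selfdual. Since $*$ is an exact additive functor, it permutes the indecomposable summands of $M$; the content of the lemma is that each atypical summand is actually fixed, i.e. $*$ acts trivially on the set of atypical summands rather than merely permuting them.

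The key step is to use the superdimension bookkeeping coming from $DS$. From the last displayed computation in the excerpt, $DS(M)$ splits into exactly four indecomposable summands, each of superdimension $+1$ or $-1$; in particular $DS$ applied to each indecomposable summand of $M$ contributes a bounded, signed amount, and since $DS$ is a tensor functor compatible with $*$ (the functors $^+,^-$ and hence $H^\pm$ commute with $*$-duality up to the obvious identifications), $DS$ maps a $*$-swapped pair to a $*$-swapped pair of objects of opposite-or-equal superdimension. Now suppose for contradiction that $*$ swaps two distinct atypical indecomposable summands $M_1 \cong M_2^*$ of $M$. Both are atypical, hence non-projective (a projective atypical summand would be one of the $P[a,b]$, but those have already been accounted for inside the $\A_{S^i}\otimes\A_{S^j}$ decomposition and are not summands of $S^i\otimes S^j$ itself), so $DS(M_i) \neq 0$ by the main theorem of \cite{Heidersdorf-Weissauer-tensor}; indeed an atypical $Gl(2|2)$-module has a nonzero image under $DS$ landing in atypical $Gl(1|1)$-modules, all of which have superdimension $0$. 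Since $DS(M_1)$ and $DS(M_2) = DS(M_1)^*$ are then two distinct summands of $DS(M)$ each of superdimension $0$, they must be among the four summands of $sdim \neq 0$ — contradiction, because those all have nonzero superdimension. Hence no atypical summand can be swapped with a different one, so every atypical summand is $*$-invariant.

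The remaining bookkeeping is to make precise which summands of $M$ are atypical and to confirm they are non-projective. Here I would invoke that in $\calR_2$ the atypical-but-not-maximally-atypical (singly atypical) modules occurring are all projective and have been split off into the lower-atypicality part treated in Section \ref{sec:gl(2|2)-lower-atypicality}; within the maximal atypical block $\Gamma$, a summand of $M$ of superdimension $0$ that is projective would be a $P[a,b]$, and one checks from the $\A_{S^i}\otimes\A_{S^j}$ formulas and Lemma \ref{composition-factors} that the maximal atypical part of $S^i\otimes S^j$ contains no projective summand (its composition factors, displayed in the weight-lattice pictures above, do not assemble into a projective cover). So every atypical summand $M_i$ has $DS(M_i)\neq 0$ with $sdim(DS(M_i)) = sdim(M_i) = 0$, which is exactly what the argument above needs.

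The main obstacle I anticipate is the compatibility of $DS$ (equivalently $H^\pm$) with $*$-duality: one must check that the differential $\partial = \rho(x)$ interacts correctly with the twisted dual, so that $H^\pm(M^*) \cong H^{\mp}(M)^*$ or $H^\pm(M^*)\cong H^\pm(M)^*$ with the appropriate parity shift, and that the induced map on $K_0$ and on superdimensions is $*$-equivariant. Granting the formalism of \cite{Heidersdorf-Weissauer-tensor} this is routine, since $x$ can be chosen compatibly with the form defining $*$; but it is the one place where a genuine verification, rather than pure combinatorics, is needed. Once that is in hand, the superdimension-counting argument closes immediately.
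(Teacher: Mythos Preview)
Your approach via $DS$ and superdimension contains a genuine error. You assert that ``an atypical $Gl(2|2)$-module has a nonzero image under $DS$ landing in atypical $Gl(1|1)$-modules, all of which have superdimension $0$.'' This is false: the atypical irreducible $Gl(1|1)$-modules are precisely the one-dimensional Berezin powers $Ber^k$ (and their parity shifts), which have superdimension $\pm 1$, not $0$. It is the \emph{typical} $Gl(1|1)$-modules that have superdimension $0$. Since $DS$ is a tensor functor it preserves superdimension, so $sdim(DS(M_i)) = sdim(M_i)$; there is simply no mechanism in your argument forcing $sdim(M_i)=0$ for a putative non-$*$-invariant summand $M_i$. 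Consequently the contradiction you derive (``two summands of $DS(M)$ with superdimension $0$ among four summands of nonzero superdimension'') never arises. Indeed, the very next corollary in the paper shows that every maximally atypical summand has \emph{nonzero} superdimension, so your conclusion is incompatible with what follows.

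The paper's own proof is far more elementary and avoids $DS$ entirely. Since $M=S^i\otimes S^j$ is $*$-selfdual and $*$ acts trivially on irreducibles, a non-$*$-invariant indecomposable summand $I$ would force $I^*\not\cong I$ to also be a summand with $[I]=[I^*]$ in $K_0$. If $I$ has length $1$ it is irreducible and hence automatically $*$-invariant, so assume length $>1$. Looking at the displayed weight-lattice picture of composition factors, the $\Box$-factors (multiplicity $2$) admit no nontrivial extensions among themselves, so any indecomposable summand of length $>1$ must contain at least one $\circ$-factor. But each $\circ$-factor occurs in $M$ with multiplicity exactly $1$, so it cannot lie in both $I$ and $I^*$. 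That is the whole argument: a two-line multiplicity count, with no need for $DS$, compatibility of $*$ with cohomology, or superdimension bookkeeping.
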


\begin{proof} If $I$ is a direct summand which is not $^*$-invariant, $M$ contains $I^*$ as a direct summand and $[I]= [I^*]$ in $K_0({\calR}_n)$ since $^*$ identity on irreducible modules. However any summand of length $>1$ must contain a factor of type $\circ$ which occur in $M$ only with multiplicity 1, a contradiction. 
\end{proof}

\begin{cor} The superdimension of any maximally atypical summand is $\neq 0$.
\end{cor}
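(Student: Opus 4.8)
The plan is to combine the previous lemma with the explicit description of the Ext-quiver. We know by the Duflo-Serganova computation that $M = S^i \otimes S^j$ has at most four indecomposable summands of nonzero superdimension, and that the $DS$-image consists of four pieces, each of superdimension $\pm 1$. Since the map $d = H^+ - H^-$ (equivalently $DS$ composed with $\Pi \mapsto -1$) is a ring homomorphism on $K_0$, the total superdimension of $M$ is the product $sdim(S^i) \cdot sdim(S^j)$; more importantly each of the four irreducible summands $B^{i+j}, B^{i-1}[1-j], B^{j-1}[1-i], B^{-2}[2-i-j]$ of $DS(M)$ must lie in $DS(I)$ for some indecomposable atypical summand $I$ of $M$, because $DS$ vanishes on the non-maximally-atypical part (these are projective, hence killed by $DS$, since projectives in $\mathcal{R}_2$ restricted to lower atypicality vanish under $DS$ — or one simply notes the $K_0$-decomposition contains only maximally atypical factors). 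So the key point is that the maximally atypical part carries \emph{all} of the nonzero contribution to $d(M)$.

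The main argument then runs as follows. First I would invoke the previous lemma: every atypical (i.e. maximally atypical) indecomposable direct summand $I$ of $M$ is $^*$-selfdual. Next I would observe that $d(I) = H^+(I) - H^-(I)$ need not a priori be nonzero for a given $I$; but summing over all maximally atypical summands $I$ must reproduce $d(M)$, which by the explicit formula equals $[B^{i+j}] - [B^{i-1}] - [B^{j-1}] + [B^{-2}]$ in $K_0(\mathcal{R}_1)$ (four nonzero terms, reading off the parity shifts). Since there are at most four indecomposable maximally atypical summands and the total $d$ has four linearly independent irreducible terms with coefficient $\pm 1$, no cancellation is possible: each summand must contribute at least one nonzero term, hence $d(I) \neq 0$ for every maximally atypical summand $I$, and therefore $sdim(I) = $ (the value of $d(I)$ under $\dim$, appropriately) $\neq 0$. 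The point is the pigeonhole: four nonvanishing pieces distributed among at most four boxes, each of which we've just shown must be nonempty.

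Let me reststructure this more carefully, since the cleanest route avoids counting. I would argue: suppose some maximally atypical indecomposable summand $I$ has $sdim(I) = 0$, i.e. $d(I) = 0$ in $K_0(\mathcal{R}_1)$. By the $^*$-selfduality lemma $I^* \cong I$, so $I$ is a single summand (not paired with a distinct $I^*$). Then $DS(I)$ is a module (in $T_1$) with $d(I) = 0$, meaning $H^+(I) \cong H^-(I)$ in $K_0$. But every indecomposable summand of $M$ other than $I$ still accounts, collectively, for all four irreducible constituents of $DS(M)$; and since $DS$ of a maximally atypical irreducible $L$ is $B^k \oplus \Pi B^{1-k-\ell}$-type with \emph{both} a $+$ and a $\Pi$-part (from $DS(S^i) = B^i \oplus \Pi B^{i}$ up to twist, using $S^i \simeq B^i$ in $Gl(1|1)$ is false — rather $DS(S^i) = Ber^i \oplus \Pi Ber^i$ as recalled in the introduction), the full list forces every summand to be hit. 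The hard part — and the step I'd expect to require the most care — is precisely this accounting: one must rule out that the four constituents of $DS(M)$ all funnel into three of the (up to four) summands, leaving a fourth summand with $DS(I)$ having equal $H^+$ and $H^-$. This is where the precise multiplicities in the $K_0$-decomposition of Lemma \ref{composition-factors}, together with $sdim$ being additive and multiplicative under $\otimes$, pin things down: $sdim(M) = sdim(S^i)sdim(S^j)$ is a specific nonzero integer (a product of two values $1-i$-type quantities, concretely $(\pm)(\text{something})$), and if one summand had $sdim = 0$ the remaining at-most-three summands would have to sum to this value while also, via $d$, reproducing four independent irreducibles — and the parity bookkeeping of the four pieces $B^{i+j}[0], B^{i-1}[1-j], B^{j-1}[1-i], B^{-2}[2-i-j]$ (two of one parity class, two of the other, for generic $i,j$) makes this impossible. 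Thus $sdim(I) \neq 0$ for every maximally atypical summand, which is the claim.
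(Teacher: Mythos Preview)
Your approach differs fundamentally from the paper's, and it cannot close the gap you yourself identify.

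The paper's proof is a two-line structural argument that never looks at the four-term decomposition of $DS(M)$. It observes first that $M$ contains no projective cover as a summand (inspect the composition factors). Then: if a maximally atypical indecomposable summand $I$ had $sdim(I)=0$, one would have $DS(I)=0$; but $\ker(DS)=AKac$ by \cite{Heidersdorf-Weissauer-tensor}, and anti-Kac modules are not $^*$-invariant unless projective. Since the previous lemma gives $I^*\cong I$, this would force $I$ to be projective --- a contradiction.

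Your counting argument has two independent flaws. First, the pigeonhole step is simply wrong: four linearly independent terms in $d(M)$ distributed among at most four summands does \emph{not} force each summand to receive one; all four could sit in a single summand. More to the point, a hypothetical summand $I$ with $sdim(I)=0$ might well have $DS(I)=0$, in which case it is entirely invisible in $DS(M)$, and no amount of bookkeeping on the four visible constituents can detect it. Second, even if you could establish $d(I)\neq 0$ in $K_0(\calR_1)$, this does not imply $sdim(I)\neq 0$: the element $[B^a]-[B^b]$ with $a\neq b$ is nonzero in $K_0$ but has superdimension $1-1=0$, so the implication you rely on is false.

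The missing ingredient is exactly the structural input from \cite{Heidersdorf-Weissauer-tensor}: the identification of $\ker(DS)$ with modules admitting an anti-Kac filtration, together with the fact that such modules (when not projective) fail to be $^*$-selfdual. Combined with the previous lemma this finishes the proof immediately; the explicit shape of $DS(S^i\otimes S^j)$ plays no role at this step and is used only later, for the indecomposability argument.
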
 

\begin{proof} $M$ does not contain any projectice cover (look at composition factors). If $sdim(I) = 0$, $DS(I) = 0$. However $ker(DS) = \mathcal{C}^-$ \cite[Theorem 4.1]{Heidersdorf-Weissauer-tensor} (the modules with a filtration by anti-Kac-modules) which are not *-invariant, unless they are projective. 
\end{proof}

Assume $i>j$. By $*$-invariance the Loewy length of a direct summand is either 1 or 3. If $I$ is irreducible, then necessarily $I = \Box$ for a composition factor of the socle. By socle considerations both $\Box$ will split as direct summands. The remaining module would have  superdimension zero, hence the Loewy length of a direct summand is 3. Fix a composition factor of type $\Box$. The multiplicity of $\Box$ in the socle cannot be 2. If the multiplicity of $\Box$ in the socle is zero, then $\Box$ has to be in the middle Loewy layer. But this would force composition factors of type $\circ$ to be in the socle. Contradiction. Hence

\begin{cor} For $n \geq 2$ and $i> j$ \[ soc(S^{i}\otimes S^{j}) \ = \ S^{i+j-1} \oplus Ber S^{i+j-3} \oplus \cdots \oplus Ber^{j-1} S^{i-j +1}.\] 
\end{cor}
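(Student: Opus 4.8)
The plan is to determine the socle of $S^i \otimes S^j$ (for $i>j$) by combining three inputs already assembled in the excerpt: the $K_0$-decomposition from Lemma \ref{composition-factors}, the socle embedding from the Proposition of subsection ``Socle Estimates'', and the structural restrictions coming from the $DS$-functor analysis. The target is to pin down $soc(S^i\otimes S^j) = S^{i+j-1} \oplus Ber\, S^{i+j-3} \oplus \cdots \oplus Ber^{j-1} S^{i-j+1}$, i.e. that each of the multiplicity-$2$ (type $\Box$) composition factors in the weight-lattice picture appears in the socle with multiplicity exactly one, and nothing else does.

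First I would invoke the $DS$-count: $M = S^i\otimes S^j$ splits into at most four indecomposable summands of nonzero superdimension, and by the Corollary just above, \emph{every} maximally atypical summand has nonzero superdimension, so $M$ has at most four indecomposable summands in the maximal atypical block. Combined with the Lemma that every atypical summand is $^*$-invariant, and with the observation (from the weight-lattice picture) that every summand of Loewy length $>1$ must contain a type $\circ$ factor, which occurs in $M$ with multiplicity $1$, I can argue that the type $\Box$ factors at the two ``ends'' — but really I want \emph{all} the $\Box$ factors — split off. The key mechanism: if a $\Box$-factor $L$ were \emph{not} in the socle, then by $^*$-invariance (so $soc \cong cosoc$) it would sit strictly inside the middle Loewy layer of some length-$3$ summand, forcing one of its $\circ$-neighbours in the Ext-quiver into the socle of that summand; but a $\circ$ appears only once in $M$, and a socle is semisimple and $^*$-selfdual, so a single $\circ$ in a socle is impossible since $\circ$ is not $^*$-selfdual on the relevant weights (or more simply: it would then also appear in the cosocle, doubling it). Hence every $\Box$ lies in $soc(M)$; and the Proposition's upper bound, together with $soc'(M)=0$, shows the multiplicity of each $\Box$ in the socle is exactly $1$ (the bound gives $\le 3\cdot S^{i+j-3}\oplus\cdots$, but once the four-summand and $^*$-invariance constraints are imposed, multiplicity $2$ in the socle of a single summand is excluded, and multiplicity $3$ would need three summands each contributing that factor to their socle, leaving too few summands for the rest). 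Finally, no $\circ$-factor can lie in the socle, again because $soc\cong cosoc$ would force multiplicity $\ge 2$, contradicting the $K_0$-count; so $soc(M)$ consists precisely of one copy of each $\Box$.

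The main obstacle I expect is the bookkeeping that turns ``at most four indecomposable summands, all $^*$-invariant, all of Loewy length $1$ or $3$'' into ``each $\Box$ is an irreducible summand or sits in the socle of a length-$3$ summand with socle-multiplicity $1$''. One has to rule out the scenario where two distinct $\Box$-factors share a single length-$3$ summand with one of them \emph{not} in its socle, and the scenario where a $\Box$ appears with socle-multiplicity $2$ inside one summand; both are handled by the ``$\circ$ occurs only once'' principle applied to the Ext-quiver picture (every $\Box$ in the middle layer of a length-$3$ $^*$-selfdual summand drags a $\circ$ into both socle and cosocle), but making this fully rigorous requires carefully reading off from the displayed Ext-quiver which $\circ$'s are adjacent to which $\Box$'s, and checking the superdimension/length constraints leave no room. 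Once that combinatorial dust settles, the identification of the socle is forced, completing the corollary; the $i=j$ case is analogous, using the second weight-lattice picture with the extra $\odot$ at $B^{i-1}$ and $\circ$ at $B^{i-2}$, where the same argument shows the socle is $S^{2i-1}\oplus Ber\,S^{2i-3}\oplus\cdots\oplus Ber^{i-1}S^1$ (the lone $\circ$ at $B^{i-2}$ cannot enter the socle, by the same $soc\cong cosoc$ obstruction).
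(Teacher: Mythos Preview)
Your approach is essentially the paper's---combine the Proposition's socle bound ($soc'(M)=0$, so only $\Box$-type factors can appear in the socle) with $*$-invariance and multiplicity counting---but your execution has two issues.

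The minor one: every irreducible representation is $*$-selfdual, so your claim that ``$\circ$ is not $^*$-selfdual on the relevant weights'' is simply false. Your parenthetical fallback (appearance in the cosocle doubles the multiplicity) is the right mechanism, but note that it only bites once you know the relevant summand has Loewy length $3$, so that socle and cosocle occupy distinct layers. The paper does not argue this way at all: it uses the Proposition directly to conclude that no $\circ$ can lie in $soc(M)$, which both forbids $\circ$ as an irreducible summand and supplies the contradiction when a $\Box$ is pushed to the middle layer.

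The more substantive one: your route to ``multiplicity of each $\Box$ in the socle is exactly $1$'' detours through the four-summand $DS$ bound, which the paper reserves for the subsequent indecomposability argument and does not invoke here. The paper's reasoning is shorter and sidesteps the bookkeeping you flag as an obstacle: it first rules out irreducible summands (so every summand has Loewy length exactly $3$), and then the exclusion of socle-multiplicity $2$ is a one-liner---if a $\Box$ sat in the socle with multiplicity $2$, then by $soc\cong cosoc$ it would also sit in the cosocle with multiplicity $2$, forcing total multiplicity $\ge 4$ in $M$ against the $K_0$-count of $2$. Your version never isolates this step; you assert ``multiplicity $2$ in the socle of a single summand is excluded'' without saying why, and the four-summand bound by itself does not rule out the configuration $\Box_k\oplus\Box_k$ plus two further length-$3$ summands. (Relatedly, your phrase ``the $\Box$ factors split off'' is the wrong picture: in fact nothing splits off---$M$ is later shown to be indecomposable---and what you need is that each $\Box$ lies in the socle with multiplicity one, not that it is a direct summand.) Once you insert the paper's two-line multiplicity count after establishing Loewy length $3$, the rest of your outline goes through.
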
  

We conclude that the superdimension of a direct summand is either 2 or 4. Hence $M$ is either indecomposable or splits into two summands $ M = I_1 \bigoplus I_2$ of superdimension 2. If $M$ would split, it would split in the following way: 

\[ \xymatrix{ & \circ \ar@{-}[d] & & &  \\ \circ \ar@{-}[r] & \Box \ar@{-}[r] \ar@{-}[d] & \circ \ar@{-}[d] &  & \oplus &\circ \ar@{-}[r] & \Box  \ar@{-}[r] \ar@{-}[d] & \circ \ar@{-}[d] & &  \\ & \circ \ar@{-}[r]
& \Box \ar@{-}[d] \ar@{-}[r] & \circ \ar@{-}[d] & & & \circ \ar@{-}[r] & \Box \ar@{-}[r]\ar@{-}[d]  & \circ & \\ &  & \circ \ar@{-}[u] \ar@{-}[r]& \Box  \ar@{-}[r]  & \circ  & &  & \circ &  }\]


Now we use the ring homomorphism $d: K_0(\calR_n) \to K_0(\calR_{n-1})$ defined by $d(M)= H^+(M) - H^-(M)$ as above. We know \[ d(S^i \otimes S^j) = B^{i+j} + (-1)^{1-j} B^{i-1} + (-1)^{1-i} B^{j-1} + (-1)^{2-i-j} B^{-2}\] since we can just take the formula for $DS(S^i \otimes S^j)$ and replace the parity shifts $\Pi^{i}$ by $(-1)^i$. Since $DS$ maps Anti-Kac modules to zero, $d$ applied to any square with edges $B^kS^i$, $B^{k+1}S^{i-1}$, $B^{k+1}S^i$, $B^k S^{i+1}$ is zero. Hence $d(I_2)$ is given by applying $d$ to the hook in the lower right $d(S^{i+j} + S^{i+j-1} + B^{-1}S^{i+j})$ and to $(B^v S^{i+j+1-2v} +  B^v S^{i+j-2v})$ from the upper left of $I_2$. We get $d(I_2) = B^{i+j} + (-1)^{i-j} B^{-2} + (-1)^v B^{i+j+1-v} + (-1)^v B^{i+j-v}$ with the two additional summands $(-1)^v B^{i+j+1-v} + (-1)^v B^{i+j-v}$. Contradiction, hence $M$ is indecomposable.

\medskip

Now assume $i=j$. By the socle estimates for $S^i \otimes S^i$ and $*$-duality either $B^{i-1}$ splits as a direct summand or both $B^{i-1}$ lie in the middle Loewy layer. Note that $Hom(B^{i-1}, S^i \otimes S^i) = Hom(B^{i-1} \otimes (S^i)^{\vee}, S^i) = End(S^i) =k$, hence the last case cannot happen. Hence $B^{i-1}$ splits as a direct summand. We show that the remaining module $M'$ in $S^i \otimes S^i = B^{i-1} \oplus M'$ is indecomposable. As in the $i>j$-case the Loewy length of any direct summand of $M'$ must be 3. As before we obtain for $i=j$ \[ soc(S^{i}\otimes S^{i}) \ = \ S^{2i-1} \oplus Ber S^{2i-3} \oplus \cdots \oplus Ber^{i-1} S^{1} \oplus B^{i-1}.\]  The remaining part $M'$ can either split into three indecomposable modules of superdimension one each, in a direct sum of two modules of superdimension one respectively two or is indecomposable. One cannot split the upper left  $\tilde{I}$  \[ \xymatrix@C=1em@R=1em{ & & \circ \ar@{-}[d] \\ & \circ \ar@{-}[r] & \Box \\ \circ \ar@{-}[urr] & &  }\] as a direct 
summand since its superdimension is $-1$. Similarly one cannot split  \[ \xymatrix@C=1em@R=1em{ & & \circ \ar@{-}[d] & \\ & \circ \ar@{-}[r] & \Box \ar@{-}[d] \ar@{-}[r] & \circ \\ \circ \ar@{-}[urr] & & \circ &  }\] as a direct summand since the remaining module would have superdimension zero. Since all composition factors except the $B$'s have superdimension $\pm 2$, $M'$ could split only into $M' = I_1 \oplus I_2$ with $sdim(I_1) = 1$ and $sdim(I_2) = 2$ with $I_2$ as above. We argue now as in the $i>j$-case. In the Grothendieck ring $K_0(\calR_n)$ \[ d(M) = B^{2i} + (-1)^{1-i} B^{i-1} + (-1)^{1-i} B^{i-1},\] but $d(I_2)$ has four summands as in the $i>j$-case. Contradiction, hence $M$ is indecomposable.


\begin{cor}\label{gl-2-2-final} Up to summands which are not in the maximal atypical block we obtain $S^i \otimes S^j \simeq M$ $(i > j$) where $M$ is
indecomposable with Loewy structure \[ \begin{pmatrix} S^{i+j-1} \quad Ber S^{i+j-3} \quad \cdots \quad Ber^{j-1} S^{i-j +1} \\  S^{i+j}(1+Ber^{-1}) \quad \cdots \quad Ber^j S^{i-j} (1+Ber^{-1})  \\ S^{i+j-1} \quad Ber S^{i+j-3} \quad \cdots \quad Ber^{j-1} S^{i-j +1}\end{pmatrix} \]
and $S^i \otimes S^i = B^{i-1} \oplus M$ where $M$ is indecomposable with Loewy
structure \[ \begin{pmatrix} S^{2i-1} \quad Ber S^{2i-3} \quad \cdots \quad Ber^{i-1} S^{1} \\ S^{2i}(1+Ber^{-1}) \quad \cdots \quad Ber^i S^{0} (1+Ber^{-1}) \quad B^{i-2}   \\ S^{2i-1} + Ber S^{2i-3} \quad  \cdots \quad Ber^{i-1} S^{1} \end{pmatrix}. \]
\end{cor}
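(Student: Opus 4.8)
The plan is to assemble the corollary from three things already in place: the $K_0$-decomposition of Lemma~\ref{composition-factors}, the socle computation of the preceding corollary, and the indecomposability argument just carried out with the ring homomorphism $d$. Write $M$ for $S^i\otimes S^j$ when $i>j$, and for the complement of $B^{i-1}$ in $S^i\otimes S^i$ when $i=j$. In the latter case $B^{i-1}$ genuinely splits off: the socle estimate for $S^i\otimes S^i$ leaves only the possibilities that $B^{i-1}$ lies in the socle or that both copies lie in the middle Loewy layer, and $\mathrm{Hom}(B^{i-1},S^i\otimes S^i)=\mathrm{Hom}(B^{i-1}\otimes(S^i)^{\vee},S^i)=\mathrm{End}(S^i)=k$ rules out the second; $^*$-selfduality then places the unique $B^{i-1}$ in the cosocle as well, so it is a summand of multiplicity one.

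First I would record that $M$ is $^*$-selfdual: $S^i$ and $S^j$ are simple, hence $^*$-selfdual, so $S^i\otimes S^j$ is; and $B^{i-1}$ is simple, so its complement in $S^i\otimes S^i$ is $^*$-selfdual too. Since $^*$-duality is trivial on semisimple modules this yields $\mathrm{cosoc}(M)\cong\mathrm{soc}(M)$, and $\mathrm{soc}(M)$ is the top row of the displayed diagram by the preceding corollary. Next, as shown above, $^*$-invariance of atypical summands makes the Loewy length of every summand of $M$ odd, and since none of the composition factors listed in Lemma~\ref{composition-factors} is a projective cover $P[a,b]$, $M$ has no projective summand; hence $M$ has Loewy length exactly $3$. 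Therefore the middle layer $\mathrm{rad}(M)/\mathrm{rad}^2(M)$ is the semisimple module whose class in $K_0$ equals $[M]-[\mathrm{soc}(M)]-[\mathrm{cosoc}(M)]=[M]-2[\mathrm{soc}(M)]$, and substituting Lemma~\ref{composition-factors} together with the socle one reads off exactly the middle rows claimed: each multiplicity-two factor $Ber^\nu S^{i+j-1-2\nu}$ is used up by socle and cosocle, leaving the multiplicity-one factors $Ber^\nu S^{i+j-2\nu}(1+Ber^{-1})$ (and, when $i=j$, the extra $B^{i-2}$) in the middle. That it is really these factors that sit in each layer is the placement argument already given: a $\Box$-factor in the radical layer would drag a $\circ$-factor into the socle, contradicting $\mathrm{soc}(M)\subseteq\bigoplus\Box$.

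Finally I would cite indecomposability, proved in the paragraphs immediately preceding the statement: using $d=H^+-H^-$ and the vanishing of $d$ on the elementary squares of the weight lattice (a consequence of $DS$ killing Anti-Kac modules), any splitting $M=I_1\oplus I_2$ of superdimensions $(2,2)$ for $i>j$, resp.\ $(1,2)$ after removing $B^{i-1}$ for $i=j$, would force $d(I_2)$ to acquire two spurious monomials incompatible with the computed value of $d(S^i\otimes S^j)$. Since every substantive step has already been done in this section, I do not anticipate a real obstacle; the one point needing care is the $i=j$ bookkeeping — confirming that $B^{i-1}$ splits off with multiplicity exactly one and that the leftover $B^{i-2}$ lands in the middle layer rather than in socle or cosocle — but this too follows from the socle estimate and $^*$-selfduality.
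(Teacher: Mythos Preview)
Your proposal is correct and follows exactly the paper's approach: the corollary has no separate proof in the paper but is the summary of the preceding subsections, and you correctly assemble the $K_0$-decomposition (Lemma~\ref{composition-factors}), the socle corollary, $^*$-selfduality, and the $d$-argument for indecomposability, including the $\Hom(B^{i-1},S^i\otimes S^i)=k$ computation for the $i=j$ splitting.

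One small imprecision: your justification ``no projective summand, hence Loewy length exactly $3$'' is not a valid implication in general (non-projective indecomposables need not have Loewy length $\leq 3$). The paper simply asserts ``by $*$-invariance the Loewy length of a direct summand is either $1$ or $3$'' without further argument, so you are not missing anything relative to the paper; but the honest reason is that once $\mathrm{soc}(M)=\mathrm{cosoc}(M)$ consists of the $\Box$-factors with multiplicity one each, all $\Box$'s are exhausted by top and bottom, and an even Loewy length would (via $^*$-selfduality) force every composition factor to occur with even multiplicity, contradicting the multiplicity-one $\circ$'s. This matches the spirit of your placement argument and is the only point where your write-up could be tightened.
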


We remark that the summand $Ber^{i-1}$ in $(\Pi^i S^i)^{\otimes 2}$ belongs to $\Lambda^2(\Pi^i S^i)$ and the summand $M$ to $Sym^2(\Pi^i S^i)$, see also \cite{Heidersdorf-Weissauer-Tannaka}. Note that $\Lambda^2(\Pi(V)) = Sym^2(V)$ for $V \in \calR_n$.

\subsection{The $GL(2|2)$-case} It is worth summarizing the situation in the $n=2$-case. In the $GL(2|2)$-case the irreducible representations are either typical, singly atypical or double (maximal) atypical. Every typical representation is a mixed tensor and every singly atypical irreducible representation is a Berezin twist of a mixed tensor. Hence the results of \cite{Heidersdorf-mixed-tensors} give the decomposition law for tensor products between typical and/or singly atypical irreducible representations. In \cite[Remark 13.4]{Heidersdorf-mixed-tensors} it is also explained how to decompose the tensor products between a typical and an irreducible maximal atypical representation in the $GL(2|2)$-case. Hence the fusion rules between irreducible representations are known except for the tensor product of a singly atypical and a maximal atypical representation, but these could be calculated by imitating our approach in the maximal atypical case. Since every irreducible maximal atypical representation of $GL(2|2)$ is of the form $[a,b]$ and any such representation is a Berezin twist of one of the $S^i = [i,0]$ for the Berezin determinant $Ber$, our result covers the entire maximal atypical $GL(2|2)$-case. For the $\mathfrak{psl}(2|2)$-case these decompositions were found prior by physicists \cite{Goetz-Quella-Schomerus-psl}.



\section{Reduction to the $GL(2|2)$-case} \label{reduction}

In this section we show that the determination of the maximal atypical summands in $S^i \otimes S^j$ in $\mathcal{R}_n$ is a corollary of the $n = 2$-case if we use the formalism of \cite{Heidersdorf-Weissauer-Tannaka}. Therefore this section depends unlike the other sections on some results of \cite{Heidersdorf-Weissauer-Tannaka}.

\subsection{Clean decomposition} We do not calculate the maximal atypical composition factors of $S^i \otimes S^j$ for $n \geq 3$. Nonetheless we can determine the number of indecomposable summands and their superdimension. We assume $n \geq 2$ and $i \geq j$.

\begin{lem} The Loewy length of a direct summand in $S^i \otimes S^j$ or $S^i \otimes (S^j)^{\vee}$ is $\leq 5$.
\end{lem}

\begin{proof} Since $S^i$ is in the socle and top of $\A_{S^{i+1}}$ we have a surjection $\A_{S^{i+1}} \otimes \A_{S^{j+1}} \to S^i \otimes S^j$. By the explicit formulas for $\A_{S^{i+1}} \otimes \A_{S^{j+1}}$ , the maximal Loewy length of a summand in  $\A_{S^{i+1}} \otimes \A_{S^{j+1}}$ is $\leq 5$. For that recall that the Loewy length of a mixed tensor $R(\lambda)$ equals $2d(\lambda) + 1$, and it is easy to check that $(a,b)$ satisfies $d(a,b) = 2$. Hence the quotient $S^i \otimes S^j$ has Loewy length at most 5. The case $S^i \otimes (S^j)^{\vee}$ is proved in the same manner. 
\end{proof}

Since the Loewy length of a maximal atypical projective cover in $\calR_n$ is $2n+1$ by \cite[Theorem 5.1]{Brundan-Stroppel-1} we get

\begin{cor} For all $n$ no maximal atypical projective cover appears in the decompositions $S^i \otimes S^j$ resp. $S^i \otimes (S^j)^{\vee}$. \label{thm:proj-free}
\end{cor}

\begin{proof} For $n=2$ we saw this by brute force computations. For $n \geq 3$ we have $2n+1 >5$.
\end{proof}

We say a direct sum is {\it clean} if none of the summands is negligible (i.e. has superdimension $0$). We say a negligible module in $\calR_n$ is {potentially projective of degree $r$} if $DS^{n-r}(N) \in \mathcal{T}_r$ is projective and $DS^{i}(N)$ is not for $i \leq n-r$. 

\begin{lem} Every maximal atypical negligible summand in a tensor product $L(\lambda) \otimes L(\mu)$ is potentially projective of degree at least 3.
\end{lem}

We proved in \cite[Corollary 5.8]{Heidersdorf-Weissauer-Tannaka} that the kernel of $DS$ equals $Proj$ if we restrict $DS$ to the full subcategory $\mathcal{T}_n^{\pm}$ of indecomposable modules occuring as direct summands in an iterated tensor product of irreducible modules.

\begin{proof} The maximal atypical part of the decomposition of $S^i \otimes S^j$ in $\mathcal{R}_2$ is clean by corollary \ref{gl-2-2-final}. Further $DS$ sends negligible modules in $\mathcal{T}_n^{\pm}$ to negligible modules in $\mathcal{T}_{n-1}^{\pm}$ \cite[Ccorollary 5.5]{Heidersdorf-Weissauer-Tannaka} and the kernel of $DS$ on $\mathcal{T}_n^{\pm}$ consists of the projective elements. Since $DS^{n-2} (L(\lambda) \otimes L(\mu)) \in \mathcal{T}_2$ splits into a direct sum of irreducible representations of the form $B^{a} S^{b}$ for some $a,b \in \Z$ by our $GL(2|2)$-computations, $DS^{n-2}(N) = 0$.
\end{proof}

\begin{lem} \label{thm:clean} For all $n$ the projection of $S^i \otimes S^j$ or $S^i \otimes (S^j)^{\vee}$ on the maximal atypical block is clean.
\end{lem} 

\begin{proof} We know that this is true for $n=2$. If $N$ is a maximal atypical summand in $S^i \otimes S^j$, we apply $DS$ several times until $N$ becomes projective. Since $DS(S^i) = S^i$  for  $i < n-1$ and $DS(S^i) = S^i \oplus \Pi^{n-1-i} Ber^{-1}$ for $i \geq n-1$, the tensor product $DS \circ \ldots \circ DS (S^i \otimes S^j)$ splits into a tensor product of $S^i$'s and Berezin powers. The projective summand coming from $N$ gives now a contradiction to \ref{thm:proj-free}. In the $S^i \otimes (S^j)^{\vee}$-case we can argue in the same way using $DS( (S^j)^{\vee} ) = DS( S^j)^{\vee}$. 
\end{proof}

\subsection{Nonvanishing superdimension} In this part we refer extensively to results from \cite{Heidersdorf-Weissauer-Tannaka}. We conclude from the previous paragraph that all maximal atypical summands in $S^i \otimes S^j$ have non-vanishing superdimension. 
Hence the direct summands can be seen in the quotient $\calR_n/\mathcal{N}$ by the modules of superdimension 0. According to \cite{Heidersdorf-semisimple} \cite{Heidersdorf-Weissauer-Tannaka} the tensor subcategory generated by the image of an irreducible element $L$ in this quotient is of the form $Rep(H_L,\epsilon)$ for some algebraic supergroup $H_L$ and some twist $\epsilon$ as in section \ref{2}. We apply this to the representations $S^i$. By abuse of notation we denote the image of $S^i$ in the quotient still by $S^i$.  We show in \cite{Heidersdorf-Weissauer-Tannaka} that the connected derived group $(H_{S^i}^0)_{der}$ of $S^i$ always satisfies \[ (H_{S^i}^0)_{der} \simeq SL(i+1) \text{ for } i \leq n-2 \] and \[(H_{S^i}^0)_{der} \simeq SL(n) \text{ for } i \geq n-1.\] Furthermore the restriction of $S^i$ (seen as a representation of $H_{S^i}$) to $(H_{S^i})_{der}^0$ remains irreducible. By superdimension reasons this restriction corresponds to the standard representation of $SL(i+1)$ or $SL(n)$ respectively. The derived group of the group corresponding to the tensor category $<S^i,S^j>, \ j \neq i$ generated by $S^i$ and $S^j$ in the quotient is the direct product of the derived groups corresponding to $S^i$ and $S^j$. If $S^i \otimes S^j$ would decompose as $M_1 \oplus M_2$ (up to superdimension 0) , the restriction to the derived group \[ H_{der}^0 \simeq (H_{S^i}^0)_{der} \times (H_{S^j}^0)_{der}\] would give a decomposition $Res(M_1) \oplus Res(M_2)$ of the tensor product $Res(S^i) \otimes Res(S^j)$. But this tensor product is the external tensor product of the standard representation of the first factor with the standard representation of the second factor. Hence $S^i \otimes S^j$ is indecomposable for $i \neq j$ up to superdimension $0$. If $i = j$ then the tensor product $S^i \otimes S^i$ behaves up to summands of superdimension $0$ like the tensor product of the standard representation of $SL(n)$ with itself. Since this tensor product splits into the two irreducible representations of weight $(2,0,\ldots,0)$ and $(1,1,0,\ldots,0)$, $S^i \otimes S^i$ has two indecomposable summands of non-vanishing superdimension.

\begin{cor} \label{s-i-for-n} The tensor product $S^i \otimes S^j$ in $\calR_n$ has a single indecomposable maximal atypical summand for $i \neq j$ and decomposes in two indecomposable summands for $i = j$.
\end{cor}

\begin{remark} In other words, once the know the $GL(2|2)$-case we get corollary \ref{s-i-for-n} for free using the formalism of \cite{Heidersdorf-Weissauer-tensor} \cite{Heidersdorf-Weissauer-Tannaka}. Note that we need very little of the machinery in \cite{Heidersdorf-Weissauer-Tannaka} in the $S^i$-case since this can be treated in an adhoc way \cite[Section 9]{Heidersdorf-Weissauer-Tannaka}.
\end{remark}




\section{The lower atypical summands in $\calR_n$}\label{sec:gl(2|2)-lower-atypicality}

We compute the remaining direct summands of the tensor product $S^i \otimes S^j$ in ${\calR}_n$ for $n \geq 2$. These direct summands are all irreducible which will follow from the fact that all summands of atypicality $<n$ in an $\A_{S^i} \otimes \A_{S^j}$ tensor product are irreducible and have vanishing $Ext^1$ with each other.

\begin{lem} $\A_{S^i} \otimes \A_{S^j}$ is a direct sum of maximally atypical summands and irreducible representations of atypicality $n-2$ and likewise for $\A_{\Lambda^i} \otimes \A_{\Lambda^j}$. 
\end{lem}

\begin{proof} In the decomposition of $\lift( (i;1^i) \otimes (j; 1^j) )$ in $R_t$, the bipartitions which will not contribute to the maximal atypical block are by the formulas in section \ref{sec:comp} of the form \[ ( \ (i+j-k,k);(2^r, 1^{i+j-2r}) \ )\] for some $k,r \geq 0$ and $k \neq r$ (note that the $k=r$ part would include the maximal atypical contributions that we already calculated).  We have \begin{align*} I_{\wedge} = & \{ i+j-k,k-1,-2,-3,-4,\ldots \} \\ I_{\vee} = & \{ -1,0,1,\ldots,r-2, r, r+1, \ldots,i+j-r-1, i+j-r+1, \ldots \} \end{align*} Since $k \neq r$, neither one of the two conditions $i+j-k = i+ j-r$, $k-1 = r-1$ is satisfied, hence the two sets intersect at two points, hence the weight diagram of any such bipartition has two crosses and two circles. Clearly the weight diagrams do not have any $\vee\wedge$-pair, hence the corresponding modules are irreducible. 
\end{proof}


\begin{lem} The composition factors of $S^i \otimes S^j$ in $\calR_n$ which are not maximally atypical are given by the set 
 \[ R((i+j-k,k);(2^r, 1^{i+j-2r})), \ k,r=0,1,\ldots,min(i,j), \ k \neq r.\] All these modules have atypicality $n-2$ and are irreducible.
\end{lem}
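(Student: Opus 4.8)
The plan is to derive the statement directly from the preceding lemma together with the $K_0$-formula for $\A_{S^i}\otimes\A_{S^j}$ (Lemma~\ref{composition-factors} and the explicit $Rep(Gl_0)$-decompositions in section~\ref{sec:symmetric-times-symmetric}), combined with the socle filtration of $\A_{S^i}$ from section~\ref{stable0}. First I would recall that $lift(i)=(i)+(i-1)$, so that $[\A_{S^i}\otimes\A_{S^j}]$ in $R_t$ is the sum of the four products $(i\!-\!a)\otimes(j\!-\!b)$, $a,b\in\{0,1\}$, and that by the previous lemma every non-maximally-atypical bipartition occurring on the $R_t$ side has the form $[(i+j-k,k);(2^r,1^{i+j-2r})]$ with $k\neq r$, with associated weight diagram having exactly two crosses and two circles and no $\vee\wedge$-pair. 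Since $k(\lambda)=rk(\lambda)=2$ and $d(\lambda)=0$, Theorem~1.1 (quoted from \cite{Heidersdorf-mixed-tensors}) gives that the corresponding module $R(\lambda)$ is irreducible (Loewy length $2d(\lambda)+1=1$) and, under $F_{n|n}$, is $(n-2)$-fold atypical; these modules are projective precisely when $n=2$.

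Next I would extract which of these bipartitions actually occur, and with what multiplicity. The clean way is to run through the AC/AD/BC/BD case analysis already carried out for $(i)\otimes(j)$ (and the analogous one for the three other products in the lift), keeping track of the contributions with $\nu^L\neq(\nu^R)^*$ instead of discarding them as was done there. By the Pieri-type constraints in those cases, the $\nu^L$ that appear are single-hook partitions $(i+j-k,k)$, and the conjugate constraint on $\nu^R$ forces $\nu^R$ to be of the form $(2^r,1^{i+j-2r})$; the ranges of $k$ and the induction on $i,j$ (exactly as in the proof of Lemma~\ref{composition-factors}) then pin down $0\le k,r\le\min(i,j)$. The bookkeeping across the four lifted products, after cancelling the contributions $lift(\text{maximal atypical part})$, should leave precisely the set in the statement; one then applies $lift_\delta^{-1}$, which on these $d(\lambda)=0$ bipartitions is the identity (no linking partners), so the $R_t$-decomposition descends verbatim to $R_\delta$ and hence, via $F_{n|n}$, to $\calR_n$.

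Finally, to pass from $[\A_{S^i}\otimes\A_{S^j}]$ to $[S^i\otimes S^j]$ one uses the socle filtration $\A_{S^i}=(S^{i-1},\,S^i\oplus S^{i-2}\oplus(\text{Berezin for }i=n),\,S^{i-1})$: expanding $[\A_{S^i}][\A_{S^j}]$ in $K_0$ produces $[S^i\otimes S^j]$ together with shifted products $[S^a\otimes S^b]$ with $a\le i$, $b\le j$ not both equal, all of which are known recursively. Since those shifted products, by induction, have non-maximally-atypical parts indexed by hooks $(a+b-k,k)$ with the corresponding smaller ranges, subtracting them from the full list leaves exactly $R((i+j-k,k);(2^r,1^{i+j-2r}))$ with $k,r=0,\ldots,\min(i,j)$, $k\neq r$, each with multiplicity one — matching the statement. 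The main obstacle I anticipate is the combinatorial bookkeeping: correctly tracking the multiplicities of the hook bipartitions through all four terms of the lift and through the recursion, and verifying that the multiplicities collapse to $1$ after all cancellations; the irreducibility and atypicality claims themselves are immediate from Theorem~1.1 once the weight-diagram computation ($I_\wedge\cap I_\vee$ has two elements, no cup) in the previous lemma is in hand.
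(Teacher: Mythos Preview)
Your proposal is correct and follows the same recursive skeleton as the paper: expand $[\A_{S^i}][\A_{S^j}]$ in $K_0$ via the socle filtration of $\A_{S^i}$, isolate $[S^i\otimes S^j]$ as the one unknown term, and invoke induction on the lower products $S^p\otimes S^q$. The irreducibility and $(n-2)$-atypicality are, as you say, immediate from the weight-diagram computation in the preceding lemma.

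The paper, however, bypasses exactly the combinatorial bookkeeping you flag as the main obstacle. Rather than tracking the non-maximally-atypical bipartitions through the four lifted products and the AC/AD/BC/BD cases, it makes two cheap observations. First, a \emph{degree argument}: each $R((i+j-k,k);(2^r,1^{i+j-2r}))$ has bidegree $(i+j,i+j)$, which strictly exceeds the bidegree of anything arising in $\A_{S^p}\otimes\A_{S^q}$ with $p\le i$, $q\le j$ and $(p,q)\neq(i,j)$; hence none of these summands can be accounted for by the inductively known lower products, and all of them are forced into $S^i\otimes S^j$. Second, a \emph{counting argument}: one counts all non-maximally-atypical summands of $\A_{S^i}\otimes\A_{S^j}$, subtracts the inductively known totals coming from the lower $S^p\otimes S^q$, and observes that the remainder equals precisely the cardinality of the set in the statement --- so nothing else can appear. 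Your plan of directly cancelling bipartitions through the lift and the recursion would also reach the goal, but the degree-plus-cardinality shortcut makes the multiplicity-one conclusion automatic and spares you the case-by-case tracking.
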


\begin{proof} This is again a recursive determination from the $\A_{S^i} \otimes \A_{S^j}$ tensor products. As before the $S^i \otimes S^1$ and $S^i \otimes S^2$-cases for $i\geq 1$ respectively $i \geq 2$ should be treated separately. For $S^i \otimes S^j$, $i,j \geq 3$ we obtain the regular formulas \begin{align*} \A_{S^i} \otimes \A_{S^{j}} = &  (S^i + 2 S^{i-1} + S^{i-2}) \otimes (S^{j} + 2 S^{j-1} + S^{j-2}) \\ = & S^i \otimes S^{j} + \text{ lower terms } \end{align*} where the lower terms are known by induction. Recall from section \ref{sec:generic} that $\Gamma_{\lambda\mu}^{\nu} = 0$ unless $|\nu| \leq (r+r',s+s')$. In the $\A_{S^i} \otimes \A_{S^{j}}$ tensor product the $R(,)$'s from above can therefore not occur for degree reasons in any tensor product $\A_{S^p} \otimes \A_{S^q}$ for $p \leq i, \ q \leq j$ where either $p<i$ or $q<j$. Hence they cannot occur in any tensor product decomposition of any $S^p \otimes S^q$ for $p,q$ as above, hence they have to occur in the $S^i \otimes S^{j}$-decomposition. The number of these modules is $(min(i,j)^2 - min(i,j)$. Substracting the inductively known numbers of 
not maximally atypical contributions in $S^p \otimes S^q$ in the $\A_{S^i} \otimes \A_{S^{j}}$-tensor product from the number of all such contributions in $\A_{S^i} \otimes \A_{S^{j}}$ we get $min(i,j)^2 - min(i,j)$ remaining modules. Hence there are no other summands in $S^i \otimes S^{j}$. 
\end{proof}

\begin{lem} The irreducible representation $R((i+j-k,k);(2^r, 1^{i+j-2r}))$ is isomorphic to \[ L(i+j - k,k, 0,\ldots,0| 0,\ldots,0, -r, -i -j + r).\]
\end{lem}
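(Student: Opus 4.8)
The plan is to identify the irreducible module $R((i+j-k,k);(2^r,1^{i+j-2r}))$ by computing its highest weight $\lambda^\dagger$ via the combinatorial algorithm $\theta$ of Brundan--Stroppel referenced in Section \ref{stable0}. First I would write out the weight diagram of the bipartition $\mu = ((i+j-k,k);(2^r,1^{i+j-2r}))$ using the dictionary for $I_\wedge(\mu)$ and $I_\vee(\mu)$. From the proof of the previous lemma we already know $I_\wedge(\mu) = \{i+j-k,\,k-1,\,-2,\,-3,\,-4,\ldots\}$ and $I_\vee(\mu) = \{-1,0,1,\ldots,r-2,\,r,\,r+1,\ldots,i+j-r-1,\,i+j-r+1,\ldots\}$, and that (since $k\neq r$) the diagram has exactly two crosses, at the two intersection points, and exactly two circles, with all remaining vertices labelled $\vee$. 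Reading off, the crosses sit at positions $i+j-k$ and $k-1$ (assuming the generic ordering; one has to be a little careful about which of $k-1$, $r-1$, $i+j-r-1$ etc.\ coincide or get displaced), and the two circles sit at $r-1$ and $i+j-r$.

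Next I would apply the algorithm $\theta$ that recovers the highest weight of the irreducible socle/cosocle $L(\mu^\dagger)$ of $R(\mu)$ from the weight diagram: this is the standard procedure converting a $\wedge/\vee/\circ/\times$-labelled number line into a dominant weight for $\calR_n$ by recording the positions of the $\times$'s and $\wedge$'s (which give the $\lambda_i$ part) and the positions of the $\vee$'s and $\circ$'s relative to the baseline (which give the $\lambda_{n+j}$ part), using the same conventions as in the definition of $I_\wedge,I_\vee$ but run backwards. Carrying this out for $n=2$ (so there are exactly two ``positive'' slots and two ``negative'' slots, matching the two crosses and two circles), the two cross-positions $i+j-k$ and $k$ (after the shift $\lambda^L_2 - 1 \mapsto \lambda^L_2$) produce the positive part $(i+j-k,\,k)$, and the two circle-positions $r-1 \mapsto r$ and $i+j-r \mapsto$ the appropriate shifted value produce the negative part $(-r,\,-i-j+r)$. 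Padding with zeros in the middle for general $n$ gives exactly $\lambda^\dagger = (i+j-k,k,0,\ldots,0\mid 0,\ldots,0,-r,-i-j+r)$, which is the claimed weight. Finally, since $R(\mu)$ is irreducible by the previous lemma, $R(\mu) = L(\mu^\dagger) = L(i+j-k,k,0,\ldots,0\mid 0,\ldots,0,-r,-i-j+r)$.

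The main obstacle I expect is bookkeeping in the weight-diagram algorithm, specifically getting the shifts right: the sets $I_\wedge,I_\vee$ are defined with the shears $\lambda^L_j \mapsto \lambda^L_j - (j-1)$ and $\lambda^R_j \mapsto j - \lambda^R_j$, so recovering $\mu^\dagger$ requires inverting these shears and being careful that the two $\times$-vertices correspond to the first two parts and the two $\circ$-vertices to the last two parts in the correct order (largest first for the $\lambda_i$, and the $\circ$'s must be read so as to yield a dominant, i.e.\ weakly decreasing, negative tail). One must also check the degenerate orderings (e.g.\ when $k$ is close to $r$, or close to $i+j-r$) do not change which vertices are crosses versus circles — but this is already handled by the hypothesis $k\neq r$ and the inequalities $0\le k,r\le\min(i,j)$ forcing $k-1 < i+j-k$ and $r-1 < i+j-r$, so the four distinguished vertices are genuinely in the claimed relative positions. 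Once the shifts are pinned down, the identification is a direct computation; I would not belabor it beyond exhibiting the four vertex positions and the resulting weight.
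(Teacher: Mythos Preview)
Your plan is essentially the paper's approach: compute the weight diagram of the bipartition and apply the map $\theta$ to read off $\lambda^\dagger$. The paper's execution differs only in that it invokes the explicit description of $\theta$ from \cite{Heidersdorf-mixed-tensors} as a label-switching rule rather than as ``inverting the shears'': one takes the maximal coordinate $M$ of a $\times$ or $\circ$ in the bipartition's diagram, switches all $\vee/\wedge$ labels to the right of $M$, and switches the first $M-n+2$ non-$\times/\circ$ labels to its left; the resulting diagram is then literally the weight diagram of the highest weight $\lambda^\dagger$. In the case at hand this switches all $\vee$'s at positions $-1,\ldots,M-1$ and the $n-2$ $\wedge$'s at positions $-2,\ldots,-(n-1)$, which is exactly what produces the zero padding you mention. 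Your ``invert the shears'' description is a correct heuristic for reading off the first two and last two entries from the $\times$ and $\circ$ positions, but as stated it conflates the bipartition diagram with the highest-weight diagram; the switching step is what makes that identification legitimate.
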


\begin{proof}. Let $m$ denote the maximal coordinate of a cross or circle in the weight diagram of the bipartition. To obtain the weight diagram of the highest weight we have to switch all labels to the right of this coordinate as well as the first $M - n + 2$ labels to its left which are not labelled $\times$ or $\circ$ by the explicit description of $\theta$ in \cite[6.1]{Heidersdorf-mixed-tensors}. Since we have four symbols $\times$ and $\circ$ this amounts to switching all the labels at positions $\geq -1$ and $<M$ (all of them $\vee$'s) and the $n-2$ $\wedge$'s at positions $-2,\ldots, -n +1$ to $\vee$'s. The crosses are at the positions $i+j-k, k-1$ and the circles at the positions $i+j-r, r-1$. The result follows. 
\end{proof}

\begin{lem} The lower atypical direct summands of $S^i \otimes S^j$ in $\calR_n$ are given by the set 
 \[ R((i+j-k,k);(2^r, 1^{i+j-2r})), \ k,r=0,1,\ldots,min(i,j), \ k \neq r.\] 
\end{lem}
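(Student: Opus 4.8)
The statement to prove is that the lower (i.e.\ $(n-2)$-fold) atypical direct summands of $S^i \otimes S^j$ are exactly the irreducibles $R((i+j-k,k);(2^r,1^{i+j-2r}))$ for $k,r = 0,1,\ldots,\min(i,j)$ with $k \neq r$. The plan is to combine the three preceding lemmas. First I would recall that by the previous lemma the composition factors of $S^i \otimes S^j$ which are not maximally atypical are precisely the modules in this set, and that each such module is $(n-2)$-fold atypical and irreducible. So the content that remains is purely about indecomposability: I must show that none of these composition factors combines with any other composition factor into an indecomposable summand of length $> 1$.

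\textbf{Key steps.} The decisive input is the first lemma of this section, which says that in $\A_{S^i} \otimes \A_{S^j}$ the $(n-2)$-fold atypical summands are already irreducible as summands (their weight diagrams have no $\vee\wedge$-pair, so the associated $R(\lambda)$ has Loewy length $1$ by the Theorem of Section~\ref{stable0}). Since $S^i \otimes S^j$ is a direct summand of $\A_{S^i} \otimes \A_{S^j}$ (indeed $\A_{S^i}$ has $S^i$ in its head and socle, so $S^i \otimes S^j$ occurs as a summand after appropriate projection, or more cleanly: the recursive argument of the previous lemma exhibits the lower-atypical part of $S^i \otimes S^j$ inside the lower-atypical part of $\A_{S^i} \otimes \A_{S^j}$), any indecomposable summand of $S^i \otimes S^j$ that is lower atypical is a summand of a module all of whose lower-atypical indecomposable summands are irreducible; hence it is irreducible. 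Then I would invoke the previous lemma once more to identify which irreducibles actually appear: the lower-atypical composition factors of $S^i \otimes S^j$ are exactly the listed $R((i+j-k,k);(2^r,1^{i+j-2r}))$ with $k \neq r$, and since each is a summand (being irreducible and occurring with the correct multiplicity), the set of lower-atypical direct summands coincides with the set of lower-atypical composition factors, which is the claimed set.

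\textbf{Alternative phrasing via projectivity for $n=2$.} In the case $n=2$ one has an even cleaner argument, already sketched in the Introduction: the $(n-2)=0$-fold atypical, i.e.\ typical, summands are projective, hence injective, hence split off, so the $K_0$-decomposition immediately upgrades to a decomposition into indecomposables. For general $n$ one cannot use projectivity, which is why the first lemma of this section — the explicit weight-diagram computation showing the absence of $\vee\wedge$-pairs — is essential; I would present the general-$n$ argument as the main line and remark on the $n=2$ shortcut.

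\textbf{Main obstacle.} The genuine work has already been done in the preceding three lemmas: the weight-diagram bookkeeping (two crosses, two circles, no cups) and the recursive counting argument that pins down the multiset of lower-atypical composition factors. The only thing this final lemma adds is the observation that ``irreducible composition factor of a module whose lower-atypical indecomposable summands are all irreducible'' forces ``direct summand'', which is essentially formal. So I expect the proof to be very short — a two-line deduction — and the main (already-surmounted) obstacle to have been the verification in the first lemma of this section that the relevant weight diagrams carry no $\vee\wedge$-pair; no further calculation should be needed here.
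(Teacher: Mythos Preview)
Your argument has a factual error that creates a real gap. You assert that ``$\A_{S^i}$ has $S^i$ in its head and socle'' and conclude that $S^i \otimes S^j$ is a direct summand of $\A_{S^i} \otimes \A_{S^j}$. This is false: by the Loewy-structure lemma at the beginning of Section~\ref{sec:symmetric-times-symmetric}, the socle and cosocle of $\A_{S^i}$ are $S^{i-1}$, while $S^i$ lies in the \emph{middle} layer. So $S^i$ is not a summand of $\A_{S^i}$, and $S^i \otimes S^j$ is only a \emph{subquotient} of $\A_{S^i} \otimes \A_{S^j}$, not a direct summand. The fallback you offer (``the recursive argument exhibits the lower-atypical part of $S^i \otimes S^j$ inside that of $\A_{S^i} \otimes \A_{S^j}$'') is a statement about composition factors in $K_0$, not about an embedding of modules, so it does not supply the missing summand relation either.

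Your line can be repaired: block projection is exact, so the lower-atypical part of $S^i \otimes S^j$ is a subquotient of the lower-atypical part of $\A_{S^i} \otimes \A_{S^j}$; the latter is semisimple by the first lemma of this section, and subquotients of semisimple modules are semisimple. Together with the previous lemma identifying the composition factors, this finishes the proof. The paper, however, argues differently and in one line: by \cite{Heidersdorf-mixed-tensors} every block contains at most one irreducible mixed tensor, so the pairwise distinct irreducible mixed tensors listed in the previous lemma lie in pairwise distinct blocks; hence $\Ext^1$ between any two of them vanishes and they all split off. This avoids any reference to $\A_{S^i} \otimes \A_{S^j}$ altogether.
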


\begin{proof} For any irreducible mixed tensors $R(\lambda), R(\mu)$ we have \[Ext_1(R(\lambda),R(\mu)) = 0\] since every block contains a unique irreducible mixed tensor by \cite[Lemma 8.1]{Heidersdorf-mixed-tensors}.
\end{proof} 

For a maximally atypical weight $(\lambda_1,\ldots,\lambda_n| - \lambda_n, \ldots,-\lambda_1)$ denote by \[ L_0(\lambda_1,\ldots,\lambda_n) \boxtimes  L_0( - \lambda_n, \ldots,-\lambda_1)\] the underlying irreducible $GL(n) \times GL(n)$-module. Denote by $\pi$ the following additive map from irreducible $GL(n) \times GL(n)$ modules to irreducible $GL(n|n)$-modules: \begin{align*} \pi & ( (L_0(\lambda_1,\ldots,\lambda_n)  \boxtimes  L_0( \mu_1, \ldots, \mu_n) ) \\ & = \begin{cases} 0 \qquad & L(\lambda_1,\ldots,\lambda_n | \mu_1,\ldots, \mu_n) \in \Gamma \\ L(\lambda_1,\ldots,\lambda_n | \mu_1,\ldots, \mu_n) \quad & \text{else. } \end{cases}\end{align*}

\begin{cor}\label{not-max-atypical-summands} The not maximally atypical contributions to $S^i \otimes S^j$ are given by \[ \pi ( \ ( L_0 (i,0,\ldots,0) \boxtimes L_0(0,\ldots,0,-i) \ ) \otimes ( \  L_0 (j,0,\ldots,0) \boxtimes L_0(0,\ldots,0,-j) \ ).\]
\end{cor}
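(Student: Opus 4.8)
The statement is essentially a repackaging of the previous lemma, which already identifies the not-maximally-atypical composition factors (equivalently, by the preceding lemma, the not-maximally-atypical direct summands) of $S^i \otimes S^j$ as the set of irreducibles $R((i+j-k,k);(2^r,1^{i+j-2r}))$ for $k,r = 0,1,\ldots,\min(i,j)$ with $k \neq r$, and which further identifies $R((i+j-k,k);(2^r,1^{i+j-2r})) \cong L(i+j-k,k,0,\ldots,0\,|\,0,\ldots,0,-r,-i-j+r)$. So the plan is: first compute the right-hand side of the asserted identity as a tensor product of $Gl(n)\times Gl(n)$-modules, and then compare it term-by-term with the list from the previous lemma after applying the projection $\pi$.

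First I would compute the $Gl(n)\times Gl(n)$ tensor product factor by factor. On the first factor we have the $Gl(n)$-tensor product $L_0(i,0,\ldots,0) \otimes L_0(j,0,\ldots,0)$; since these are symmetric powers $\mathrm{Sym}^i \otimes \mathrm{Sym}^j$, the classical Pieri/Littlewood--Richardson rule gives $\bigoplus_{k=0}^{\min(i,j)} L_0(i+j-k,k,0,\ldots,0)$. On the second factor we have $L_0(0,\ldots,0,-i) \otimes L_0(0,\ldots,0,-j)$, which after dualising is again a product of symmetric powers, giving $\bigoplus_{r=0}^{\min(i,j)} L_0(0,\ldots,0,-r,-i-j+r)$ (these are the duals of $\bigoplus_r L_0(i+j-r,r,0,\ldots,0)$). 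Taking the exterior tensor product over $Gl(n)\times Gl(n)$ therefore yields exactly $\bigoplus_{k,r=0}^{\min(i,j)} L_0(i+j-k,k,0,\ldots,0) \boxtimes L_0(0,\ldots,0,-r,-i-j+r)$, indexed by all pairs $(k,r)$ with $0 \le k,r \le \min(i,j)$ and no restriction yet.

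Next I would apply the additive map $\pi$. By its definition, $\pi$ sends the summand indexed by $(k,r)$ to $L(i+j-k,k,0,\ldots,0\,|\,0,\ldots,0,-r,-i-j+r)$ when this weight is \emph{not} in the maximal atypical block $\Gamma$, and to $0$ when it is in $\Gamma$. The weight $(i+j-k,k,0,\ldots,0\,|\,0,\ldots,0,-r,-i-j+r)$ lies in $\Gamma$ precisely when it is maximally atypical, i.e. (using the weight-diagram description) exactly when $k = r$: this is the condition under which the cross positions and circle positions coincide rather than producing two crosses and two circles, as computed in the lemma analysing $I_\wedge$ and $I_\vee$. Hence $\pi$ kills exactly the diagonal terms $k = r$ and identifies the off-diagonal terms with $L(i+j-k,k,0,\ldots,0\,|\,0,\ldots,0,-r,-i-j+r) = R((i+j-k,k);(2^r,1^{i+j-2r}))$. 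This matches the set in the previous lemma on the nose, so the corollary follows.

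The only genuine point requiring care—and the step I would expect to be the main obstacle—is making sure the parametrisation of the summands lines up perfectly: one must check that the pair $(k,r)$ coming from the Littlewood--Richardson expansion of the two symmetric-power products corresponds, under $\theta$, to exactly the bipartition $((i+j-k,k);(2^r,1^{i+j-2r}))$, and that multiplicities are all $1$ on both sides (which they are, since $\mathrm{Sym}^i \otimes \mathrm{Sym}^j$ is multiplicity-free and each $\pi$-image is a single irreducible). Everything else is a direct invocation of the preceding three lemmas together with the classical branching rule, so no new machinery is needed.
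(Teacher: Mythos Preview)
Your proposal is correct and is essentially the argument the paper intends: the corollary is stated without proof because it follows immediately from the preceding three lemmas together with the classical Pieri decomposition of $\mathrm{Sym}^i \otimes \mathrm{Sym}^j$ for $Gl(n)$, exactly as you spell out. The only minor imprecision is your appeal to ``the lemma analysing $I_\wedge$ and $I_\vee$'' to decide when the weight lies in $\Gamma$: that computation was done for the bipartition, not for the highest weight, but since the second lemma identifies the two and since for $k=r$ the resulting weight visibly has the maximally atypical form $(\lambda_1,\lambda_2,0,\ldots,0\mid 0,\ldots,0,-\lambda_2,-\lambda_1)$ in $\Gamma$, this causes no difficulty.
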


\begin{cor}\label{final-result} For $n=2$ the tensor product $S^i \otimes S^j$ ($i >j$) decomposes as \begin{align*} S^i & \otimes S^j \simeq \begin{pmatrix} S^{i+j-1} \quad Ber S^{i+j-3} \quad \cdots \quad Ber^{j-1} S^{i-j +1} \\  S^{i+j}(1+Ber^{-1}) \quad \cdots \quad Ber^j S^{i-j} (1+Ber^{-1})  \\ S^{i+j-1} \quad Ber S^{i+j-3} \quad \cdots \quad Ber^{j-1} S^{i-j +1}\end{pmatrix}  \\ \oplus & \pi (  ( L_0 (i,0,\ldots,0) \boxtimes L_0(0,\ldots,0,-i)  ) \otimes (   L_0 (j,0,\ldots,0) \boxtimes L_0(0,\ldots,0,-j)  ).  \end{align*} The tensor product $S^i \otimes S^i$ decomposes as \begin{align*} S^i & \otimes S^i \simeq B^{i-1} \oplus \begin{pmatrix} S^{2i-1} \quad Ber S^{2i-3} \quad \cdots \quad Ber^{i-1} S^{1} \\ S^{2i}(1+Ber^{-1}) \quad \cdots \quad Ber^i S^{0} (1+Ber^{-1}) \quad B^{i-2}   \\ S^{2i-1} + Ber S^{2i-3} \quad  \cdots \quad Ber^{i-1} S^{1} \end{pmatrix} \\ \oplus & \pi ( ( L_0 (i,0,\ldots,0) \boxtimes L_0(0,\ldots,0,-i)  ) \otimes (   L_0 (i,0,\ldots,0) \boxtimes L_0(0,\ldots,0,-i)  ).  \end{align*}
\end{cor}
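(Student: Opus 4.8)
The plan is to read off the statement from the results already assembled, using the block decomposition of $\calR_2$ to see that the two displayed pieces split as a genuine direct sum. First I would write $\calR_2 = \Gamma \oplus \Gamma^{\perp}$ with $\Gamma$ the maximal atypical block and $\Gamma^{\perp}$ the (typical) remainder, so that $S^i \otimes S^j$ decomposes canonically into its projections onto $\Gamma$ and onto $\Gamma^{\perp}$; it then suffices to identify each projection.

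For the projection onto $\Gamma$ there is nothing to do beyond quoting the last corollary of section \ref{sec:gl(2|2)}: it already gives $\mathrm{pr}_\Gamma(S^i \otimes S^j) \cong M$ with the stated Loewy length three structure (and splits off the extra summand $B^{i-1}$ in the equal-rank case, using $\mathrm{Hom}(B^{i-1}, S^i \otimes S^i) = \mathrm{End}(S^i) = k$). For the projection onto $\Gamma^{\perp}$ I would invoke the lemmas of section \ref{sec:gl(2|2)-lower-atypicality}, which recursively (from the $\A_{S^i} \otimes \A_{S^j}$-formulas of section \ref{sec:symmetric-times-symmetric} together with lemma \ref{composition-factors}) identify the non-maximally-atypical composition factors of $S^i \otimes S^j$ as the irreducibles $R((i+j-k,k);(2^r,1^{i+j-2r}))$ with $k,r \in \{0,\dots,\min(i,j)\}$, $k \neq r$. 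For $n = 2$ each of these is $(n-2) = 0$-fold atypical, hence typical, hence projective and injective in $\calR_2$; therefore they admit no extensions with each other or with $\Gamma$, the projection onto $\Gamma^{\perp}$ is semisimple, and its isomorphism type is pinned down by its class in $K_0$.

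It then remains to repackage this sum of $R$'s through the map $\pi$. The identification lemma rewrites $R((i+j-k,k);(2^r,1^{i+j-2r}))$ as $L(i+j-k,k\,|\,-r,-i-j+r)$, whose underlying $Gl(2)\times Gl(2)$-module is $L_0(i+j-k,k)\boxtimes L_0(-r,-i-j+r)$; meanwhile the classical $Gl(2)$ Pieri rule gives $L_0(i,0)\otimes L_0(j,0) = \bigoplus_{k=0}^{\min(i,j)} L_0(i+j-k,k)$ and dually $L_0(0,-i)\otimes L_0(0,-j) = \bigoplus_{r=0}^{\min(i,j)} L_0(-r,-i-j+r)$. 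Applying $\pi$ to the external tensor product of $L_0(i,0)\boxtimes L_0(0,-i)$ with $L_0(j,0)\boxtimes L_0(0,-j)$ therefore kills exactly the summands with $(2^r,1^{i+j-2r}) = (i+j-k,k)^{\ast}$, and since $(2^r,1^{i+j-2r})$ is the conjugate partition of $(i+j-k,k)$ precisely when $r = k$, this is exactly the diagonal $k = r$ built into the definition of $\pi$. The output is then exactly the required sum of $R$'s, and combining the two projections (together with $B^{i-1}$ when $i = j$) yields the corollary. The main obstacle is not analytic, since indecomposability of $M$ was already settled in section \ref{sec:gl(2|2)} via the functors $DS$ and $d$; it is bookkeeping: one must verify the conjugacy match $r = k \Leftrightarrow \lambda^R = (\lambda^L)^{\ast}$ and that the two Pieri expansions run over the same index range, so that the $\pi$-packaging reproduces the combinatorial list exactly.
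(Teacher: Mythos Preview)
Your proposal is correct and matches the paper's approach: the corollary carries no separate proof in the paper and is simply the concatenation of the maximal atypical decomposition established at the end of section~\ref{sec:gl(2|2)} with the $\pi$-description of the lower-atypical summands from section~\ref{sec:gl(2|2)-lower-atypicality}, exactly as you outline. One small imprecision worth fixing: $\Gamma^{\perp}$ is not the ``typical remainder'' (it also contains the singly atypical blocks of $\calR_2$), but this does not affect your argument since the non-maximally-atypical constituents that actually occur in $S^i\otimes S^j$ are $(n-2)$-fold atypical, hence typical for $n=2$.
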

%
%
%



\section{The $GL(3|3)$-case and a conjecture}

The method applied to compute the $S^i \otimes S^j$ tensor products in the $GL(2|2)$-case works in principal for arbitrary $n$. Note that the results on the $\A_{S^i} \otimes \A_{S^j}$ tensor products are valid for any $n$. Furthermore we determined the part of $S^i \otimes S^j$ which is not  maximal atypical for any $n \geq 2$, hence we restrict here to the maximal atypical part. The obstacle to use the method of the $\calR_2$-case effectively is that the composition factors of the modules $R(a,b)$ appearing in the $\A_{S^i} \otimes \A_{S^j}$-case are difficult to compute. Decomposing a few $R(a,b)$ for small $a$ and $b$ in the $n=3$-case and then computing the composition factors of the $S^i \otimes S^j$ tensor products recursively, we arrive at the following tensor products ($\Lambda^2 = (S^2)^{\vee}$). Here we always project to the maximal atypical block.

\[ S^1 \otimes S^1 \simeq \one \oplus \begin{pmatrix} S^1 \\ S^2  +\quad (S^2)^{\vee} \quad \one  \\ S^1 \end{pmatrix}\]  
\[ S^2 \otimes S^1 \simeq \begin{pmatrix} S^2 \\  S^3 \quad [2,1,0] \quad S^1 \quad B^{-1} \\ S^2 \end{pmatrix}\] 
\[ S^3 \otimes S^1 \simeq \begin{pmatrix} S^3 \\  S^4 \quad [3,1,0] \quad S^2 \\ S^3 \end{pmatrix}\] 
\begin{align*} S^2 \otimes S^2 & \simeq [1,1,0] \\ & \oplus  \begin{pmatrix} S^3 \quad [2,1,0] \\ S^4 \quad [3,1,0] \quad [2,2,0] \quad (S^2)^{\vee} \quad [2,-1,-1] \quad [0,-1,-1] \quad S^2 \quad \one \\ S^3 \quad [2,1,0] \end{pmatrix}\end{align*}
\[ S^3 \otimes S^2 \simeq \begin{pmatrix} S^4 \quad [3,1,0] \\ S^5 \quad [4,1,0] \quad [3,2,0] \quad S^3 \quad [2,1,0] \quad [3,-1,-1] \\ S^4 \quad [3,1,0] \end{pmatrix} \] 
\begin{align*} & S^3 \otimes S^3 \simeq [2,2,0] \oplus \\ & \begin{pmatrix} S^5 \ \ [4,1,0] \ \  [3,2,0] \\ S^6 \ \ [5,1,0] \ \  [4,2,0] \  \ B^{-1}S^5 \  \ [3,3,0] \ \ S^4 \  \ [3,1,0] \ \  [1,1,0] \ \ [2,2,0] \\ S^5 \ \  [4,1,0] \ \ [3,2,0] \end{pmatrix}\end{align*} 



\begin{conj} \label{conjecture-socle} After projection to the maximal atypical block, $S^i \otimes S^j = M$ is  indecomposable if $i\neq j$ (corollary \ref{s-i-for-n}). $S^i \otimes S^i$ splits as \[ [i-1,i-1,\ldots,i-1,0] \oplus M.\] The socle of $M$ is for $i \geq j$ \[ soc(M) = [i+j-1,0,\ldots,0] \oplus [i+j-2,1,0,\ldots,0] \oplus \ldots \oplus [i,j-1,\ldots,0]\] and $M$ has Loewy length 3.  
\end{conj}

Note that since $\A_{S^i} \to \A_{S^j} \twoheadrightarrow S^i \otimes S^j$ and the maximal Loewy length of a direct summand $R(a,b)$ in $\A_{S^i} \to \A_{S^j}$ is 5, the Loewy length of $M$ is at most 5.

\section{Conflict of interest statement} On behalf of all authors, the corresponding author states that there is no conflict of interest.








\end{document}